\theoremstyle{plain}
\newtheorem{prop}{Proposition}[section]
\newtheorem{theo}[prop]{Theorem}
\newtheorem{lemm}[prop]{Lemma}
\theoremstyle{definition}
\newtheorem{rema}[prop]{Remark}
\newtheorem{exam}[prop]{Example}
\newcommand{\rank}{\mathrm{rank}}
\newcommand{\Sym}{\mathrm{Sym }}
\newcommand{\rH}{\mathrm{H}}
\newcommand{\tors}{{\text{tors}}}
\newcommand{\dt}{\mathord{\cdot}}
\newcommand{\divides}{\mathbin{|}}
\newcommand{\ndivides}{\nmid}
\newcommand{\injection}{\hookrightarrow}
\newcommand{\eps}{\varepsilon}
\newcommand{\ra}{\rightarrow}
\newcommand{\bF}{{\mathbb F}}
\newcommand{\bN}{{\mathbb N}}
\newcommand{\bP}{{\mathbb P}}
\newcommand{\bQ}{{\mathbb Q}}
\newcommand{\bR}{{\mathbb R}}
\newcommand{\bZ}{{\mathbb Z}}
\newcommand{\cN}{{\mathcal N}}
\newcommand{\cO}{{\mathcal O}}
\newcommand{\cT}{{\mathcal T}}
\newcommand{\fS}{{\mathfrak S}}
\author{David Harvey, Brendan Hassett, and Yuri Tschinkel}
\title[Characterizing projective spaces]{Characterizing projective spaces on deformations of Hilbert schemes of
K3 surfaces}
\begin{document}
\date{\today}

\maketitle

\section{Introduction} \label{sect:intro}

Let $X$ be an irreducible holomorphic symplectic manifold, i.e.,
a compact K\"ahler simply-connected manifold admitting a unique
nondegenerate holomorphic two-form.  
Let $\left(,\right)$ denote the Beauville--Bogomolov form on the 
cohomology group $\rH^2(X,\bZ)$, normalized so that it is integral
and primitive. 
When $X$ is a K3 surface
this coincides with the intersection form.  
In higher dimensions, the form induces an inclusion
\begin{equation} \label{eqn:incl}
\rH^2(X,\bZ) \subset \rH_2(X,\bZ),
\end{equation}
which allows us to extend $\left(,\right)$ to a $\bQ$-valued quadratic form.  

Lagrangian projective spaces play a fundamental r\^ole in the birational
geometry of these classes of manifolds. 
If $X$ contains a holomorphically embedded projective space $\bP^{\dim(X)/2}$ we can
consider the {\em Mukai flop} of $X$, obtained by blowing up the
projective space and blowing down the exceptional divisor 
$$E\simeq \bP(\Omega^1_{\bP^{\dim(X)/2}})$$
along the opposite ruling.  Our goal is to characterize possible homology
classes of such submanifolds, modulo the monodromy representation on
the cohomology of $X$.

Assuming $X$ contains a Lagrangian projective space $\bP^{\dim(X)/2}$,
let $\ell\in \rH_2(X,\bZ)$ denote the class of a line in $\bP^{\dim(X)/2}$, and 
$\lambda=N\ell\in \rH^2(X,\bZ)$
a positive integer multiple.  We can take $N$ to be the index of 
$\rH^2(X,\bZ) \subset \rH_2(X,\bZ)$.  Hodge theory \cite{Ran,Voisin} shows
that the deformations of $X$ containing a deformation of the Lagrangian
space coincide with the deformations of $X$ for which $\lambda \in \rH^2(X,\bZ)$
remains of type $(1,1)$.  Infinitesimal Torelli implies this is a divisor
in the deformation space, i.e., 
$$\lambda^{\perp} \subset \rH^1(X,\Omega^1_X) \simeq \rH^1(X,\cT_X).$$

We seek to establish intersection theoretic properties of $\ell$ for
various deformation-equivalence classes of holomorphic symplectic manifolds.
Previous results in this direction include
\begin{enumerate}
\item If $X$ is a K3 surface then $\left(\ell,\ell\right)=-2$.
\item If $X$ is deformation equivalent to the Hilbert scheme of length-two subschemes
of a K3 surface then $\left(\ell,\ell\right)=-5/2$. \cite{HTGAFA08}
\item If $X$ is deformation equivalent to a generalized Kummer fourfold
then $\left(\ell,\ell\right)=-3/2$.   \cite{HT10}
\end{enumerate}
Here we prove
\begin{theo} \label{theo:main}
Let $X$ be a six-dimensional K\"ahler manifold,
deformation equivalent to the
Hilbert scheme of length-three subschemes of 
a K3 surface.  Let $\bP^3 \subset X$ be a smooth subvariety and
$\ell \subset \bP^3$ a line.  Then $\left(\ell,\ell\right)=-3$
and $\rho=2\ell \in \rH^2(X,\bZ)$.  Furthermore, we have
$$\left[ \bP^3 \right]=\frac{1}{48}\left( \rho^3 + \rho^2c_2(X)\right).$$
\end{theo}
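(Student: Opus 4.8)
The plan is to exploit the Lagrangian condition to pin down the normal bundle of $\bP^3$, reduce the whole statement to an explicit model via the deformation-theoretic input of \cite{Ran,Voisin}, and then carry out the class computation either directly on the model or by a bootstrap from Chern-class and Fujiki-type relations. First I would check that $\bP^3$ is Lagrangian: the holomorphic two-form restricts to a section of $\Omega^2_{\bP^3}=0$, and since $\dim\bP^3=\tfrac12\dim X$, contraction with the two-form gives an isomorphism $N_{\bP^3/X}\cong\Omega^1_{\bP^3}$. The normal bundle sequence then yields $c(\cT_X)|_{\bP^3}=(1+h)^4(1-h)^4=(1-h^2)^4$ in $\rH^*(\bP^3)$, with $h$ the hyperplane class; hence $c_1(X)|_{\bP^3}=c_3(X)|_{\bP^3}=0$, $c_2(X)|_{\bP^3}=-4h^2$, and $[\bP^3]^2=\int_{\bP^3}c_3(\Omega^1_{\bP^3})=-4$. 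In particular the pairing of $[\bP^3]$ with any polynomial in divisor classes and Chern classes of $X$ is computed by restriction to $\bP^3$, so it is a function of the single integer $d:=\rho\dt\ell$ (so $\rho|_{\bP^3}=d\,h$).

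Next I would reduce to the model $X=\mathrm{Hilb}^3(S)$, with $S$ a projective K3 surface carrying a smooth rational curve $C$, and $\bP^3=\mathrm{Sym}^3(C)\subset\mathrm{Hilb}^3(S)$, $\ell=\{p+q+x:x\in C\}$. By \cite{Ran,Voisin} the deformations of a pair $(X,\bP^3)$ fill out the divisor $\lambda^\perp\subset\mathrm{Def}(X)$, which is connected and contains such a model; since $[\bP^3]$, $\rho$ and $c_2(X)$ vary in flat families, the value $(\ell,\ell)$, the relation $\rho=2\ell$, and the asserted class identity are constant along the family and may be verified on the model. On the model these are lattice computations: $\rho=2[C]-\delta$ is primitive in $\rH^2(X,\bZ)$ with $(\rho,\rho)=-12$, its image in $\rH_2(X,\bZ)$ has divisibility $2$, the line class is half of it, so $\rho=2\ell$, $(\ell,\ell)=-3$, and $d=\rho\dt\ell=-6$; the one point needing care is that an arbitrary $\bP^3\subset X$ has line class in the monodromy orbit realized by $\mathrm{Sym}^3(C)$ (otherwise $\rho$ might have divisibility $4$).

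For the class identity I would combine the previous two steps. On the model (or on a very general member of $\lambda^\perp$, where $\Pic(X)=\bZ\rho$) one checks that the type-$(3,3)$ rational classes in $\rH^6(X)$ relevant to $[\bP^3]$ are spanned by $\rho^3$, $\rho\,c_2(X)$ and $c_3(X)$, and the $c_3(X)$-term contributes nothing since $c_3(X)|_{\bP^3}=0$ while $\int_X\rho^3c_3(X)=\int_X\rho\,c_2(X)c_3(X)=0$ (invariant polynomials of odd degree in $\rho$ vanish). Writing $[\bP^3]=a\rho^3+b\,\rho\,c_2(X)$ and pairing with $[\bP^3]$, $\rho^3$, $\rho\,c_2(X)$ gives three equations whose left sides ($-4$, $d^3=-216$, $-4d=24$) come from Step 1 and whose right sides are evaluated from the Fujiki relations for $\mathrm{Hilb}^3(K3)$ ($\int_X\rho^6=15(\rho,\rho)^3$, with $\int_X\rho^4c_2(X)$ and $\int_X\rho^2c_2(X)^2$ known multiples of $(\rho,\rho)^2$ and $(\rho,\rho)$); solving forces $a=b=\tfrac1{48}$. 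Alternatively one can bypass the Fujiki relations and directly compute $[\mathrm{Sym}^3(C)]$ in the Nakajima/Lehn--Sorger description of $\rH^*(\mathrm{Hilb}^3(S))$.

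I expect the main obstacle to be controlling $\rH^6$ well enough: proving that $[\bP^3]$ genuinely lies in the span of $\rho^3$ and $\rho\,c_2(X)$ — equivalently, ruling out contributions from $c_3(X)$ and from the "deeper" monodromy-invariant or $\rho$-linear classes in $\rH^6(\mathrm{Hilb}^3(K3))$ — together with the identification of the monodromy orbit of the geometric line class needed for $\rho=2\ell$. Once those are in hand, the remaining work is routine Chern-class bookkeeping on $\bP^3$ and the known Fujiki constants of $\mathrm{Hilb}^3(K3)$.
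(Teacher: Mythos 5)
Your Step 1 (Lagrangian normal bundle, $c(\cT_X)|_{\bP^3}=(1-h^2)^4$, $[\bP^3]^2=-4$) and your use of the Fujiki/Riemann--Roch constants match the paper, but the two points you defer as ``the main obstacle'' are not loose ends to be tidied up later --- they are the entire content of the theorem, and your proposed route around them is circular. The reduction to the model $(\mathrm{Hilb}^3(S),\mathrm{Sym}^3(C))$ cannot work as stated: $(\ell,\ell)$ and the divisibility of $\ell$ in $\rH_2(X,\bZ)$ are deformation invariants, so an arbitrary pair $(X,\bP^3)$ can be deformed into your model only if its line class already has square $-3$ and divisibility $2$ --- which is precisely what is to be proved. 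The locus $\lambda^\perp$ is cut out by the class of the given line, and nothing guarantees it contains a point where the deformed $\bP^3$ is a $\mathrm{Sym}^3$ of a $(-2)$-curve; identifying the monodromy orbit of $\ell$ is not a ``point needing care'' but the statement itself. Relatedly, your claim that the relevant Hodge classes in $\rH^6$ are spanned by $\rho^3$, $\rho\,c_2(X)$ and $c_3(X)$ omits the distinguished absolute Hodge class $\eta$ (the trivial $G_X$-summand of $\rH^6$, cokernel of Verbitsky's map), which is present on \emph{every} deformation, is orthogonal to the subring generated by $\rH^2$ and Chern classes, and therefore cannot be detected by pairing against $\rho^3$ or $\rho\,c_2(X)$; only the quadratic relation $[\bP^3]^2=-4$ sees it. ($c_3(X)$ is harmless, as it vanishes rationally for holomorphic symplectic $X$, but $\eta$ is not.)

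Once you refuse the model shortcut, you are forced into the paper's situation: write $[\bP^3]=a\lambda c_2(X)+b\lambda^3+d\eta$ with $\lambda=4\ell\in\rH^2(X,\bZ)$, where $a$, $b$, $d$ \emph{and} $L=(\lambda,\lambda)$ are all unknown. Pairing with $\lambda^3$ and $\lambda c_2(X)$ gives two linear relations, and $[\bP^3]^2=-4$ gives a cubic one involving $d^2\,\eta\cdot\eta$ (whence the need to compute $\eta^2$, Proposition \ref{prop:eta}); eliminating $a,b$ leaves the Diophantine equation (\ref{eqn:elliptic}) in $(L,d)$, and showing its only admissible solution is $L=-48$, $d=0$ requires the full elliptic-curve analysis of Section \ref{sect:DA} (Mordell--Weil group via $2$-descent, then integral points). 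This number-theoretic step has no counterpart in your outline and cannot be replaced by ``routine bookkeeping.'' Finally, the divisibility statement $\rho=2\ell$ is obtained in the paper not from a monodromy-orbit identification but from an easy parity argument in $\rH_2(S,\bZ)\oplus\bZ\delta^\vee$ once $(\ell,\ell)=-3$ is known; that part of your plan can be salvaged in this simpler form, but only after $L$ has been pinned down.
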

This uniquely characterizes the class of the Lagrangian plane,
modulo the monodromy action, which acts transitively on the
$\rho \in \rH^2(X,\bZ)$ with $\left(\rho,\rho\right)=-12$
and $\left(\rho,\rH^2(X,\bZ)\right)=2\bZ$ \cite[\S 3]{GHS}.  

In general, we conjectured in \cite{HT09} that if $X$ is of dimension $2n$ then
$\left(\ell,\ell\right)= -(n+3)/2$, if $X$ is deformation equivalent to a Hilbert
scheme of a K3 surface.
Our main motivation for making these conjectures is to achieve a classification
of extremal rational curves on irreducible holomorphic symplectic varieties
(i.e., generators of extremal rays of birational contractions) in terms of
intersection properties under the Beauville-Bogomolov form.  

The structure of this paper is as follows:  Section~\ref{sect:cohomology} reviews
the cohomology groups of Hilbert schemes of K3 surfaces;  Section~\ref{sect:ring}
focuses on the ring structure.  We employ representation theory to get results
on the Hodge classes in Section~\ref{sect:representation}.  The Hilbert scheme
of length-three subschemes is studied in detail in Section~\ref{sect:lengththree}.  
We extract the distinguished absolute Hodge class in the middle cohomology
in Section~\ref{sect:indecomp};  here `absolute Hodge classes' are
those that remain Hodge under arbitrary deformations of complex structure.
The computation of the class of the
Lagrangian three planes is worked out in Section~\ref{sect:LTP}, modulo
a number theoretic result.  This is proved in Section~\ref{sect:DA}.

{\bf Acknowledgments:} We are grateful to Noam Elkies, Lothar G\"ott\-sche, Manfred Lehn, Eyal Markman, and Christoph Sorger for useful conversations.
The second author was supported by National Science Foundation Grant 0554491 and 0901645;
the third author was supported by National Science Foundation
Grants 0554280 and 0602333.  We appreciate the hospitality of the American Institute of
Mathematics, where some of this work was done.

\section{Cohomology of Hilbert schemes}
\label{sect:cohomology}
Let $X$ be deformation equivalent to the punctual Hilbert scheme $S^{[n]}$,
where $S$ is a K3 surface. 
For $n>1$ the Beauville-Bogomolov form can be written
\cite[\S 8]{beauville}
$$\rH^2(X,\bZ) \simeq \rH^2(S,\bZ)_{\left(,\right)} \oplus_{\perp} \bZ \delta, \quad
				\left(\delta,\delta\right)=-2(n-1)$$
where $2\delta$ is the class of the `diagonal' divisor 
$\Delta^{[n]} \subset S^{[n]}$ parameterizing
nonreduced subschemes.  For each homology class $f\in \rH^2(S,\bZ)$, 
let $f \in \rH^2(X,\bZ)$ denote the class parameterizing subschemes with
some support along $f$.  This is compatible with the lattice embedding
above.  
Duality gives a $\bQ$-valued form 
on homology
$$
\rH_2(X,\bZ) \simeq \rH_2(S,\bZ)_{\left(,\right)} \oplus_{\perp} \bZ \delta^{\vee}, \quad
			\left(\delta^{\vee},\delta^{\vee}\right)=-\frac{1}{2(n-1)},$$
where $\delta^{\vee}$ is characterized as the homology class orthogonal
to $\rH^2(S,\bZ)$ and satisfying $\delta^{\vee}\cdot \delta =1$.

\begin{theo} \cite{Gott90}
Let $S$ be a K3 surface and $S^{[n]}$ its Hilbert scheme.  
Consider the Poincar\'e polynomial
$$p(S^{[n]},z)=\sum_{j=0}^{4n} \beta_j(S^{[n]})z^j.$$
Then 
$$\sum_{n=0}^{\infty} p(S^{[n]},z)t^n= \prod_{m=1}^{\infty}
(1-z^{2m-2}t^m)^{-1}(1-z^{2m}t^m)^{-22}(1-z^{2m+2}t^m)^{-1}.$$
\end{theo}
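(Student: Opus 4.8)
The plan is to compute the Betti numbers of $S^{[n]}$ through the stratification governed by the Hilbert--Chow morphism $\pi\colon S^{[n]}\to \Sym^n(S)$, organized into a generating-function identity. The first reduction replaces the Poincar\'e polynomial by a motivic invariant. A K3 surface has no odd cohomology, and the punctual Hilbert schemes that occur as fibers below admit affine cell decompositions; consequently $S^{[n]}$ has vanishing odd Betti numbers, so its Poincar\'e polynomial coincides with the Hodge--Deligne specialization $E(S^{[n]};z,z)$. I therefore work in the Grothendieck ring $K_0(\mathrm{Var})$, where the relevant invariant is additive over locally closed stratifications and multiplicative over fibrations with Tate-type fibers, and where the Lefschetz class $\bL=[\bA^1]$ specializes to $z^2$. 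Over a finite field this is G\"ottsche's original route via the Weil conjectures and Frobenius traces; the motivic formulation is equivalent and cleaner.

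Next I would stratify $\Sym^n(S)$ by partition type: for $\nu=(1^{a_1}2^{a_2}\cdots)$ with $\sum_i i\,a_i=n$, let $\Sym^n(S)_\nu$ parameterize $0$-cycles consisting of $a_i$ distinct points of multiplicity $i$ for each $i$. Over $\Sym^n(S)_\nu$ the fiber of $\pi$ is the product $\prod_i M_i^{\,a_i}$, where $M_i$ is the punctual Hilbert scheme of length-$i$ subschemes supported at a single point. The required local input is the class of $M_i$: by Brian\c{c}on's theorem $M_i$ is irreducible of dimension $i-1$, and it carries an affine cell decomposition indexed by partitions of $i$, so that $[M_i]$ is a polynomial in $\bL$ and
\begin{equation*}
\sum_{i\ge 0} [M_i]\, t^i = \prod_{m\ge 1}\bigl(1-\bL^{\,m-1}t^m\bigr)^{-1}
\end{equation*}
in the appropriate completion. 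This is the one genuinely geometric computation and the main obstacle; after it the argument is formal and combinatorial.

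I would then assemble the global series. Combining the stratification of $\Sym^n(S)$ with Macdonald's symmetric-product formula and the multiplicativity of the realization, $\sum_n[S^{[n]}]t^n$ factors as an infinite product in which each cohomology class of $S$ contributes, for every $m\ge1$, a factor recording both its own degree $d$ and the Lefschetz shift $\bL^{\,m-1}$ coming from $M_m$. A class in $\rH^d(S)$ with $d$ even yields $\bigl(1-\bL^{\,m-1+d/2}t^m\bigr)^{-1}$ (odd classes would contribute numerator factors of the form $(1+\cdots)$, but $b_1(S)=b_3(S)=0$). Specializing $\bL\mapsto z^2$ sends $\bL^{\,m-1+d/2}$ to $z^{2m-2+d}$, so the groups $\rH^0,\rH^2,\rH^4$ of the K3 surface produce the factors $(1-z^{2m-2}t^m)^{-1}$, $(1-z^{2m}t^m)^{-22}$, $(1-z^{2m+2}t^m)^{-1}$, with exponents the Betti numbers $b_0=1$, $b_2=22$, $b_4=1$. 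This yields
\begin{equation*}
\sum_{n=0}^\infty p(S^{[n]},z)\,t^n=\prod_{m=1}^\infty (1-z^{2m-2}t^m)^{-1}(1-z^{2m}t^m)^{-22}(1-z^{2m+2}t^m)^{-1}.
\end{equation*}
The exponent bookkeeping is routine once the punctual generating function and the multiplicativity of the realization are in hand; those two ingredients, together with the vanishing of odd cohomology that licenses reading Betti numbers off the motivic class, are the real content.
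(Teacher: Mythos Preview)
The paper does not prove this theorem at all: it is stated with the citation \cite{Gott90} and used as a black box to extract the Poincar\'e polynomials of $S^{[n]}$ for small $n$. There is therefore nothing to compare against.

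That said, your outline is a faithful sketch of G\"ottsche's original argument (in its motivic/Hodge--Deligne reformulation): stratify the Hilbert--Chow morphism by partition type, input the generating series for the punctual Hilbert schemes via the Ellingsrud--Str\o mme cell decomposition, and combine with Macdonald's symmetric-product formula. The identification of the key nontrivial step---the computation of $\sum_i [M_i]\,t^i$---is correct, and the remaining bookkeeping is as you describe. If you were asked to supply a proof where the paper gives none, this is the right one; just be aware that the cell decomposition of the punctual Hilbert scheme is due to Ellingsrud--Str\o mme rather than Brian\c{c}on (who proved irreducibility), and that the passage from motivic class to Poincar\'e polynomial requires purity of the Hodge structure of $S^{[n]}$, which follows from smoothness and projectivity.
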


To save space, we write 
$$q(S^{[n]},z)=\sum_{j=0}^{n} \beta_{2j} z^j,$$
which determines the Poincar\'e polynomial by Poincar\'e duality.
We have
$$\begin{array}{rcl}
q(S,z)&=& 1+ 22z  \\
q(S^{[2]},z) &=& 1 + 23 z + 276 z^2 \\
q(S^{[3]},z) &=& 1 + 23 z + 299 z^2 + 2554 z^3.
\end{array}
$$

A theorem of Verbitsky \cite[Theorem 1.5]{Verb} asserts that the 
homomorphism arising from the cup product
$$\mu_{k,n}:\mathrm{Sym}^k \rH^2(S^{[n]},\bQ) \ra \rH^{2k}(S^{[n]},\bQ)$$
is injective for $k \le n$.  Thus its image has dimension
$$\binom{22+k}{k}.$$
In light of the computations above, $\mu_{2,2}$ is an isomorphism,
$\mu_{2,3}$ has cokernel of dimension $23$, and $\mu_{3,3}$
has cokernel of dimension 
$$2554-\binom{25}{3}=254=\binom{23}{2}+1.$$
The cup product also induces a homomorphism
$$\mathrm{coker}(\mu_{2,3}) \otimes \rH^2(S^{[3]},\bQ) \ra
 \mathrm{coker}(\mu_{3,3}).$$
This homomorphism has been observed by Markman \cite[p. 80]{MarkCrelle}.
More generally, he analyzes what classes are needed
to generate the cohomology ring $\rH^*(S^{[n]},\bQ)$, beyond those
coming $\rH^2(S^{[2]},\bQ)$.  Markman uses Chern classes of universal
sheaves over the product $S^{[n]} \times S$;  a detailed discussion of the 
$n=3$ case is given in \cite[Ex. 14]{MarkCrelle}.

\section{The ring structure on cohomology}
\label{sect:ring}

Lehn-Sorger \cite{LS} and Nakajima \cite{Nakajima} described 
$\rH^*(S^{[n]},\bQ)$ in terms of $\rH^*(S,\bQ)$.  
We review the Lehn-Sorger formalism for the cup product on the cohomology ring.

Let $S$ be a K3 surface and $A=\rH^*(S,\bQ)(1)$, the cohomology ring
shifted so that it has weights $-2,0$, and $2$;  this is written 
as $\rH^*(S,\bQ)[2]$ in their paper.  Shifting the weights
changes the sign of the intersection form, which is denoted by
$\left<,\right>$;  this has signature $(20,4)$.
Let $T:A \ra \bQ$ denote the linear form 
$$\gamma \mapsto -\int_S \gamma$$
and $\left<,\right>$ the induced bilinear form
$$\left<\gamma_1,\gamma_2\right>=T(\gamma_1\gamma_2)=-\int_S \gamma_1 \gamma_2.$$

For each $n \in \bN$, we endow $A^{\otimes n}$ with an analogous structure.  We shall
use the fact that $A$ has only graded pieces of {\em even} degrees to simplify the description
in \cite{LS}.  In this situation, graded commutative multiplication rules are in fact
commutative, given by the rule
$$(a_1 \otimes \cdots \otimes a_n)\cdot (b_1 \otimes \cdots \otimes b_n)=
(a_1b_1)\otimes \cdots \otimes (a_nb_n).
$$
The linear form 
$$T:A^{\otimes n} \ra \bQ$$
is defined by
$$T(a_1\otimes \cdots \otimes a_n) =T(a_1)\cdots T(a_n).$$
Let $\left<,\right>$ denote the associated bilinear form
$$\left<a,b\right>=T(a\cdot b).$$

The symmetric group $\fS_n$ acts on $A^{\otimes n}$ by the rule  
$$\pi(a_1 \otimes \cdots \otimes a_n)=
a_{\pi^{-1}(1)} \otimes \cdots \otimes a_{\pi^{-1}(n)}.$$

Given a partition $n=n_1+\ldots+n_k$ with $n_1,\ldots,n_k \in \bN$, we have
a generalized multiplication map
$$\begin{array}{rcl}
A^{\otimes n} & \ra & A^{\otimes k}\\
a_1\otimes \cdots \otimes a_n & \mapsto & (a_1\cdots a_{n_1})
\otimes  \cdots \otimes 
(a_{n_1+\cdots+n_{k-1}+1}\cdots a_{n_1+\cdots+n_k}).
\end{array}
$$

Given a finite set $I\subset \{1,\ldots,n\}$, let $A^{\otimes I}$ denote
the tensor power with factors indexed by elements of $I$.  Given a surjection
$\phi:I \ra J$, there is an induced multiplication
$$
\phi^* \colon A^{\otimes I} \ra  A^{\otimes J} 
$$
defined as above.  Let 
$$\phi_*: A^{\otimes J} \ra A^{\otimes I}$$
denote the {\em adjoint} of $\phi^*$, i.e., 
$$\left< \phi^*a,b \right>=\left<a,\phi_* b\right>$$
for $a \in A^{\otimes I}$ and $b\in A^{\otimes J}$. 

We have the composite 
$$A \stackrel{\Delta_*}{\ra} A\otimes A  \ra A,$$
where the first map is adjoint comultiplication and the second is
multiplication.  Let $e:=e(A)$ denote the image of $1$ under the 
composed map.
\begin{rema}
We evaluate the signs of $\Delta_*1$ and $e(A)$. Let $\Delta_S$ denote the fundamental
class of the diagonal in $\rH^*(S\times S,\bZ)=\rH^*(S,\bZ) \otimes \rH^*(S,\bZ)$.
Using the adjoint property, we have
$$\begin{array}{rcl}
\left<\Delta_*1, \alpha \otimes \beta \right> &=& \left< 1, \alpha\beta \right> \\
					      &=& T(\alpha\beta) \\
					      &=& -\int_S \alpha \beta 
\end{array}
$$
whereas 
$$\begin{array}{rcl}
\left<\Delta_S, \alpha \otimes \beta \right> &=& \left< \sum_j e_j \otimes e_j^{\vee}, \alpha\otimes \beta \right> \\
					     &=& \sum_j T(e_j \alpha) T(e_j^{\vee}\beta)\\
					     &=& \int_S \alpha \beta,
\end{array}
$$
where $\{e_j\}$ is a homogeneous basis for $\rH^*(S,\bQ)$ with Poincar\'e-dual basis $e_j^{\vee}$.  
Therefore, we find 
\begin{equation}
\Delta_*1=-[\Delta_S].  \label{eqnsignswitch}
\end{equation}
Furthermore, we have
$$\int_S e(A)=-T(e(A))=-\left<e(A),1\right>=-\left<\Delta_*1,\Delta_*1\right>=-\chi(S)=-24,$$
so $e(A)$ is a {\em negative} multiple of the point class.  Nevertheless, we still have
(cf. \cite[\S 2.2]{LS})
$$e(A)=\chi(S) \mathrm{vol}, \quad \text{ where } \quad T(\mathrm{vol})=1,$$
but $\mathrm{vol}$ differs from the standard volume form by sign.  
\end{rema}

Let $\left<\pi\right> \backslash [n]$ denote the set of orbits
of $[n]=\{1,2,\ldots,n\}$ under the action of $\pi$.  Set
$$A\{\fS_n\}=\oplus_{\pi\in \fS_n} A^{\otimes \left<\pi\right> \backslash [n]} \cdot \pi$$
which admits an action of $\fS_n$.  First, note that $\sigma \in \fS_n$ induces
a bijection
$$\begin{array}{rcl}
\sigma:\left<\pi\right> \backslash [n] & \ra & \left<\sigma \pi \sigma^{-1}\right>
\backslash [n] \\
x & \mapsto & \sigma x.
\end{array}
$$
Thus we obtain an isomorphism
$$\begin{array}{rcl}
\tilde{\sigma}: A\{S_n\} & \ra & A\{S_n\} \\
		a \pi & \mapsto & \sigma^* \sigma\pi \sigma^{-1}.
\end{array}
$$ 
\begin{exam} \cite[2.9, 2.17]{LS}
We have $A\{\fS_2\}=A^{\otimes 2}\mathrm{id} \oplus A (12)$ 
and
$$
A\{\fS_3\}=A^{\otimes 3}\mathrm{id}\oplus A^{\otimes 2}(12) \oplus
A^{\otimes 2}(13)\oplus A^{\otimes 2}(23) 
           \oplus A(123) \oplus A(132).
$$
\end{exam}

Let $A^{[n]}\subset A\{\fS_n\}$ denote the invariants under this action.    
Then we have \cite[\S 2]{LS}
$$A^{[n]}=\sum_{\| \alpha \|=n} \bigotimes_{i}\Sym^{\alpha_i}A,$$
where $\alpha$ corresponds to a partition
$$\underbrace{1+\cdots+1}_{\alpha_1 \text{ times}}+
\underbrace{2+\cdots+2}_{\alpha_2 \text{ times}}+\cdots$$
and
$$n=\|\alpha \|=\alpha_1+2\alpha_2+\cdots+n\alpha_n.$$
Note that this is compatible with Hodge structures;  in particular,
$A^{[n]}$ is a representation of the Hodge group of $S$ and the 
special orthogonal  group $G_S$ associated with the intersection form on $\rH^2(S,\bR)$.
We interpret this as acting on $A$, trivially on the summands $\rH^0(S,\bR)$
and $\rH^4(S,\bR)$.

\begin{theo} \cite[Theorem 3.2]{LS}
Let $S$ be a K3 surface.  Then there is a canonical isomorphism of graded rings
$$(\rH^*(S,\bQ)[2])^{[n]} \stackrel{\sim}{\ra}\rH^*(S^{[n]},\bQ)[2n].$$
\end{theo}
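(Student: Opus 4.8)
The plan is to prove the stated isomorphism in two movements: first a canonical additive identification, which is essentially already available, and then the genuinely hard matching of the two ring structures, for which the decisive tool is the operator of cup product by the boundary divisor. First I would fix the additive isomorphism by assembling the cohomologies into a single graded space $\bigoplus_{n\ge 0}\rH^*(S^{[n]},\bQ)$. By the work of Nakajima \cite{Nakajima} this space carries an irreducible action of the Heisenberg algebra modeled on $\rH^*(S,\bQ)$, with creation operators $\mathfrak{q}_i(\alpha)$ raising the point-number by $i$; the resulting monomial basis, indexed by partitions decorated with classes of $S$, is in canonical bijection with the basis of $A^{[n]}=\sum_{\|\alpha\|=n}\bigotimes_i\Sym^{\alpha_i}A$ recorded above. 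This produces a canonical graded-vector-space map $\Phi\colon A^{[n]}\to\rH^*(S^{[n]},\bQ)[2n]$ of the correct dimension (matching G\"ottsche's count \cite{Gott90}), and the entire problem reduces to showing that $\Phi$ respects products.

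Second, I would realize both sides as graded Frobenius algebras and reduce the multiplicative statement to matching two pieces of data. On the algebraic side one checks that $A\{\fS_n\}$ with its product is associative and, because $A$ is concentrated in even degrees, commutative; that $A^{[n]}$ is its invariant subring; and that the linear form $T$ makes it Frobenius. On the geometric side the Frobenius form is Poincar\'e duality, i.e.\ integration over $S^{[n]}$. The sign bookkeeping of the preceding Remark is exactly what is needed here: the identity $\Delta_*1=-[\Delta_S]$ of \eqref{eqnsignswitch}, together with the normalization $e(A)=\chi(S)\,\mathrm{vol}$ with $T(\mathrm{vol})=1$, makes $\Phi$ intertwine the two trace forms. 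Since a graded-commutative Frobenius structure is symmetric and nondegenerate, every structure constant is recovered from products against a spanning set, so it suffices to compute, on both sides, cup product by one distinguished family of classes.

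The crux is to match this product by the boundary class. Let $\mathfrak{d}$ denote the operator given by cup product with the diagonal divisor $\Delta^{[n]}=2\delta$. The central geometric input, due to Lehn and used in \cite{LS}, is a commutation formula expressing $[\mathfrak{d},\mathfrak{q}_i(\alpha)]$ as an explicit combination of Nakajima operators built from the multiplication and the adjoint comultiplication of $A$; because $S$ is a K3 surface with $c_1(S)=0$, the terms that would otherwise involve the canonical class drop out, and the only contribution beyond the transport term is governed by the Euler class $e(A)=\chi(S)\,\mathrm{vol}$. I would then compute the analogous operator on the algebraic side, namely multiplication by the image in $A^{[n]}$ of the sum of transpositions $\sum_{i<j}(ij)$, whose structure constants are likewise controlled by $e(A)$ and the maps $\Delta_*$ and the product of $A$, and verify that the two commutation relations agree term by term under $\Phi$.

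Finally I would invoke a generation step: the operators $\mathfrak{q}_1(\alpha)$ together with $\mathfrak{d}$ generate, via the commutator formula, all higher creation operators $\mathfrak{q}_i(\alpha)$ and hence all of $\bigoplus_n\rH^*(S^{[n]},\bQ)$, so that the entire cup product is determined by the Frobenius form and the single operator $\mathfrak{d}$. The identical reconstruction holds for $A^{[n]}$ with the boundary element playing the role of $\mathfrak{d}$. Since $\Phi$ matches the Frobenius forms and the boundary operators, and since these data pin down all products, $\Phi$ is an isomorphism of graded rings; its naturality in $S$ and compatibility with Hodge structures and the monodromy action give the asserted canonicity. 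I expect the main obstacle to be the boundary commutator formula itself: proving it requires a delicate intersection-theoretic analysis on the nested Hilbert schemes $S^{[n,n+1]}$ and the incidence correspondences between them, and one must separately establish that $\mathfrak{d}$ and the Heisenberg operators genuinely generate enough of the ring for the reconstruction to determine \emph{every} structure constant and not merely those already visible in low degree.
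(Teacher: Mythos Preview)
The paper gives no proof of this statement: it is quoted as \cite[Theorem~3.2]{LS} and used thereafter as a black box, so there is nothing in the paper to compare your argument against.  For what it is worth, your outline---the additive identification via Nakajima's Heisenberg action, the Frobenius-algebra framing with the sign normalizations of the preceding Remark, the reduction to Lehn's commutator formula for cup product by the boundary class $\mathfrak{d}$, and the generation step---is a fair summary of the strategy actually executed in \cite{LS} together with Lehn's earlier paper on the boundary operator.  The obstacle you flag, namely the intersection-theoretic proof of the commutator identity on the nested Hilbert schemes $S^{[n,n+1]}$ and the verification that $\mathfrak{d}$ and the $\mathfrak{q}_1(\alpha)$ generate enough to pin down all structure constants, is exactly where the substantive work in those references lies; the present paper simply imports the finished result.
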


In the cohomology of the Hilbert scheme, the subring 
generated by $\rH^2(S^{[n]})$ plays a special role.  
We have an isomorphism
$$\rH^2(S^{[n]},\bZ) = \rH^2(S,\bZ) \oplus \bZ \delta,$$
where $2\delta$ parameterizes the non-reduced schemes of $S$.  We express this in terms
of our presentation.  Given $D \in \rH^2(S,\bZ)$, the class
$$\sum_{i=1}^n 1_{\{1\}} \otimes \cdots \otimes 1_{\{i-1\}} \otimes D_{\{i\}}
\otimes 1_{\{i+1\}}\otimes \cdots \otimes 1_{\{n\}} (\mathrm{id})$$
is the corresponding class in $\rH^2(S^{[n]},\bQ)[2n]$.  
Using the explicit form of the isomorphism in \cite[2.7]{LS}
and Nakajima's isomorphism (\cite[Thm. 3.6]{LS}), we find that 
$$\delta=\sum_{1 \le i<j \le n} 1_{\{1\}} \otimes \ldots \otimes 1_{\{i-1\}} \otimes 1_{\{i,j\}} 
\otimes 1_{\{i+1\}} \otimes \cdots \otimes 1_{\{j-1\}} \otimes 1_{\{j+1 \}} \otimes \cdots \otimes 1_{\{n\}}(ij).$$
Here is the essence of the computation:
the interpretation of the nonreduced subschemes via the correspondence
$$Z_2=\{(\xi,x,\xi'):|\xi'|-|\xi|=2x \} \subset S^{[n-2]} \times S \times S^{[n]}$$
allows us to express $\delta$ in terms of Nakajima's creation and annihilation
operators, and thus in 
$$\rH^*(S^{[n]},\bQ)[2n].$$

We describe the general rule for evaluating the fundamental class in 
$A^{[n]}$.  Let 
$$[\mathrm{pt}]\in \rH^4(S,\bZ)[2] \subset A$$
be the point class, which is of degree $-2$.  Let
$$[\mathrm{pt}]_{\{1\}} \otimes \cdots \otimes [\mathrm{pt}]_{\{n\}} (\mathrm{id})
\in A^{[n]}$$
denote the unique class of degree $-2n$ up to scalar.  Then the class of a point in
$S^{[n]}$ is equal to \cite[2.10]{LS}
\begin{equation} \label{eqn:pointclass}
[\mathrm{pt}_{S^{[n]}}]=\frac{1}{n!}
[\mathrm{pt}]_{\{1\}} \otimes \cdots \otimes [\mathrm{pt}]_{\{n\}} (\mathrm{id})
.
\end{equation}

\section{Decomposition of the cohomology representation} 
\label{sect:representation}

We summarize general results from representation theory.  
For an orthogonal group of odd dimension $2r+1$, the highest weights 
$\lambda=(\lambda_1,\ldots,\lambda_r)$ of irreducible representations $V(\lambda)$
are vectors consisting entirely of integers (or half integers) in the fundamental chamber
$$\{\lambda_1 \ge \lambda_2 \ge \ldots \ge \lambda_{r-1} \ge \lambda_r \ge 0 \}.$$
Since we only consider even-weight representations, we ignore cases
where the $\lambda_j$ are half-integers.  For orthogonal groups of even 
dimension $2r$, the fundamental chamber is
$$\{\lambda_1 \ge \lambda_2 \ge \ldots \ge \lambda_{r-1} \ge |\lambda_r| \ge 0 \}.$$
Recall that
\begin{itemize}
\item{$V(1,0,\ldots)$ is the standard representation $V$.}
\item{We have
$$V(\underbrace{1,\cdots,1}_{k \text{ times}},0,\cdots)=\bigwedge^k V,$$
provided $k < r$ (in the
even case) or 
$k\le r$ (in the odd case);
see, for instance, \cite[Thms. 19.2 and 19.14]{FulHar}.}
\item{$V(k,0,\ldots)=\Sym^k(V)/\Sym^{k-2}(V)$, embedded via the dual to the quadratic form
on $V$.}
\item{For the odd orthogonal group, we have
$$\dim V(\lambda)=\prod_{i<j} \frac{\ell_i-\ell_j}{j-i} \prod_{i \le j}\frac{\ell_i+\ell_j}{2n+1-i-j}$$
where $\ell_i=\lambda_i+n-i+\frac{1}{2}$ \cite[p. 408]{FulHar}.}
\item{For the even orthogonal group, we have
$$\dim V(\lambda)=\prod_{i<j} \frac{\ell^2_i-\ell^2_j}{(j-i)(2n-i-j)}$$
where $\ell_i=\lambda_i+n-i$ \cite[p. 410]{FulHar}.}
\item{Let $V_X(\lambda)$ denote an irreducible representation of an orthogonal
group $G_X$ of dimension $2r+1$, $G_S \subset G_X$ the
orthogonal subgroup $G_S\subset G_X$ of dimension $2r$ fixing a non-isotropic vector
with negative self-intersection,
and $V_S(\overline{\lambda})$ the representation of $G_S$ with highest weight $\overline{\lambda}$.  
Then we have the branching rule \cite[p. 426]{FulHar}
$$\mathrm{Res}^{G_X}_{G_S}V_X(\lambda)= \oplus_{{\overline \lambda}} V_S(\overline \lambda),$$
where the sum ranges over all $\overline{\lambda}$ with
$$\lambda_1 \ge {\overline \lambda_1} \ge \lambda_2 \ge {\overline \lambda_2} \ge \cdots 
\lambda_r \ge |{\overline \lambda_r}|.$$}
\end{itemize}

Let $X$ be a generic deformation of $S^{[n]}$.  Our goal is to decompose
$\rH^*(X,\bQ)$ into irreducible representations for the action of the
identity component $G_X$ of the special orthogonal group 
associated with the Beauville-Bogomolov form on $\rH^2(X,\bQ)$. 
Let $G_S$ denote the identity component of the special orthogonal
group associated with the intersection form on $\rH^2(S,\bQ)$.  
The decomposition 
$$\rH^2(S^{[2]},\bZ)=\rH^2(S,\bZ) \oplus_{\perp} \bZ \delta$$
induces an inclusion $G_S \subset G_X$.  

\begin{prop}
Let $X$ be deformation 
equivalent to $S^{[n]}$ for some $n$.  
Then $G_X$ admits a representation on the cohomology ring of $X$.  
\end{prop}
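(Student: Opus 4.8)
The plan is to produce the representation of $G_X$ from the Looijenga--Lunts--Verbitsky structure, not directly. For a compact hyperk\"ahler manifold $X$ and a K\"ahler class $\omega\in\rH^2(X,\bR)$, the Hard Lefschetz theorem gives an $\mathfrak{sl}_2$-triple $(L_\omega,H,\Lambda_\omega)$ acting on $\rH^*(X,\bQ)$, where $L_\omega$ is cup product with $\omega$, $H$ acts on $\rH^k(X,\bQ)$ by the scalar $k-\dim_\bC X$, and $\Lambda_\omega$ is the adjoint Lefschetz operator. By Verbitsky \cite{Verb}, the Lie subalgebra $\mathfrak{g}(X)\subset\End(\rH^*(X,\bQ))$ generated by these triples over all K\"ahler classes is canonically isomorphic to $\mathfrak{so}\bigl(\rH^2(X,\bQ)\oplus_{\perp}U\bigr)$, with $U$ a hyperbolic plane and the form on $\rH^2(X,\bQ)$ the Beauville--Bogomolov form; in particular $\rH^*(X,\bQ)$ is a finite-dimensional module over this orthogonal Lie algebra. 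The orthogonal splitting identifies $\mathfrak{so}(\rH^2(X,\bQ))$, acting on $\rH^2(X,\bQ)$ and trivially on $U$, with a subalgebra, and this subalgebra is precisely $\mathrm{Lie}(G_X)$; restricting Verbitsky's action gives a grading-preserving action of $\mathrm{Lie}(G_X)$ on $\rH^*(X,\bQ)$.

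It remains to integrate this to $G_X$ itself. Since $G_X$ is connected, the only issue is that the $\mathrm{Lie}(G_X)$-module $\rH^*(X,\bQ)$ must be of tensor type, i.e.\ contain no constituent of half-integral highest weight; equivalently, no spin representation of $\mathfrak{so}\bigl(\rH^2(X,\bQ)\oplus_{\perp}U\bigr)$ occurs in $\rH^*(X,\bQ)$. For $X=S^{[n]}$ I would read this off the Lehn--Sorger description $\rH^*(S^{[n]},\bQ)[2n]\cong (\rH^*(S,\bQ)[2])^{[n]}$: the space $A=\rH^*(S,\bQ)[2]$ is the Mukai lattice of the K3 surface, hence is $\widetilde{\rH}^2(S,\bQ)$, and it is an orthogonal summand of $\widetilde{\rH}^2(S^{[n]},\bQ)=A\oplus_{\perp}\bQ\delta$; on the resulting subalgebra $\mathfrak{so}(A)$, Verbitsky's action is the tautological diagonal action on $A^{[n]}\subseteq\bigoplus_k A^{\otimes k}$, so $\rH^*(S^{[n]},\bQ)$ is a subquotient of tensor powers of the standard representation of $\mathfrak{so}(A)$ and carries only tensor-type $\mathfrak{so}(A)$-constituents. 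Since a spin representation of $\mathfrak{so}(A\oplus_{\perp}\bQ\delta)$ restricts to spin representations of $\mathfrak{so}(A)$, none can occur in $\rH^*(S^{[n]},\bQ)$, so the action of $\mathfrak{so}\bigl(\rH^2(X,\bQ)\oplus_{\perp}U\bigr)$ integrates to $\mathrm{SO}\bigl(\rH^2(X,\bQ)\oplus_{\perp}U\bigr)^{\circ}$; restricting to $G_X$ gives the desired representation. As the graded ring $\rH^*(X,\bQ)$ and the isometry type of the Beauville--Bogomolov form are deformation invariants, this holds for every $X$ deformation equivalent to $S^{[n]}$.

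Finally I would record two compatibilities used later. On the subring of $\rH^*(X,\bQ)$ generated by $\rH^2(X,\bQ)$ the $G_X$-action is the evident one --- a $G_X$-equivariant quotient of $\Sym^{\bullet}\rH^2(X,\bQ)$, pinned down in degrees $2k$ with $k\le n$ by Verbitsky's injectivity of $\mu_{k,n}$ --- so there $G_X$ acts by graded ring automorphisms. And for $X=S^{[n]}$ the subgroup $G_S\subset G_X$ arising from $\rH^2(S^{[n]},\bZ)=\rH^2(S,\bZ)\oplus_{\perp}\bZ\delta$ acts on all of $(\rH^*(S,\bQ)[2])^{[n]}$ by graded ring automorphisms, functorially in the Lehn--Sorger picture, because it fixes $\rH^0(S)$, $\rH^4(S)$ and $\delta$. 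I expect the main obstacle to be precisely the integration step: verifying that only integral-weight representations occur --- concretely, that the functorial $\mathfrak{so}(A)$-action on $(\rH^*(S,\bQ)[2])^{[n]}$ really is the restriction of Verbitsky's algebra --- so that $\mathrm{Lie}(G_X)$ exponentiates to $G_X$ rather than to a proper cover; the remaining steps are bookkeeping with the cohomology computations and branching rules already recorded.
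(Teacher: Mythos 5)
Your route is genuinely different from the paper's. The paper never touches the Looijenga--Lunts--Verbitsky algebra: it takes the monodromy group $\mathrm{Mon}\subset \Aut(\rH^*(X,\bZ))$, its image $\mathrm{Mon}^2$ on $\rH^2$, and Markman's theorem that the kernel of $\mathrm{Mon}\to\mathrm{Mon}^2$ is finite; since $G_X$ is a connected component of the Zariski closure of $\mathrm{Mon}^2$, the universal cover $\widetilde{G_X}$ acts on $\rH^*(X,\bZ)$, and the action descends to $G_X$ because the cohomology lives in even degrees. Your LLV argument buys independence from the monodromy machinery (no need for Markman's finiteness result, and the construction is manifestly deformation-invariant since the LLV algebra is built from the ring structure alone), at the price of the representation-theoretic integration step.

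That integration step is where your proposal has a real gap, and you flag it yourself: to conclude that $\rH^*(S^{[n]},\bQ)$ is of tensor type you assert that the ``tautological diagonal'' $\mathfrak{so}(A)$-action on the Lehn--Sorger model $A^{[n]}$ is the restriction of Verbitsky's algebra. That compatibility is true but is a genuine theorem (essentially Markman's identification of the $\mathfrak{so}(\widetilde{\rH}(S,\bQ))$-action built from Nakajima operators with the LLV action), and nothing in your sketch establishes it; note also that the natural $G_S$-action used elsewhere in this paper acts trivially on $\rH^0(S)\oplus\rH^4(S)$, which is \emph{not} the standard $24$-dimensional representation you need, so the identification requires care. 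Fortunately you do not need it at all: the grading operator $h$ lies in the LLV algebra and, in suitable coordinates on a maximal torus of $\mathfrak{so}\bigl(\rH^2(X,\bQ)\oplus U\bigr)$, equals twice the coweight attached to the hyperbolic summand $U$; a spin constituent has all weight coordinates half-integral, hence odd $h$-eigenvalues, hence would force nonzero odd-degree cohomology. Since $X$ has cohomology only in even degrees, no spin constituent occurs, the weights are integral, and the $\mathrm{Lie}(G_X)$-action exponentiates to $G_X$. This parity observation is exactly the ``even degrees'' point the paper itself invokes at the corresponding moment of its monodromy argument, and with it your proof closes cleanly without the Lehn--Sorger detour.
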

\begin{proof}
Let $\mathrm{Mon}\subset \mathrm{Aut}(\rH^*(X,\bZ))$ denote the monodromy group,
i.e., the group generated by the monodromy representations of all connected families
containing $X$.  
Let $\mathrm{Mon}^2\subset \mathrm{Aut}(\rH^2(X,\bZ))$ denote its image under
projection to the second cohomology group, so we have an exact sequence
$$ 1 \ra K \ra \mathrm{Mon} \ra \mathrm{Mon}^2 \ra 1.$$
Markman has shown \cite[\S 4.3]{Mark1} that $K$ is finite.  

Note that $G_X$ is a connected component of the Zariski closure of $\mathrm{Mon}^2$
(see, for example \cite[\S 1.8]{Mark1}).  Since $\mathrm{Mon}$ and $\mathrm{Mon}^2$
differ only by finite subgroups, it follows that the universal cover $\widetilde{G_X}\ra G_X$
acts on the cohomology ring of $X$.  Since the cohomology of $X$ is nonzero only
in even degrees, this representation passes to $G_X$.  
\end{proof}

In principle, we can decompose $\rH^*(X,\bR)$ explicitly into isotypic
components as follows:
\begin{enumerate}
\item{Fix an embedding $G_S \subset G_X$, e.g., using the isomorphism
$$\rH^2(X,\bZ)\simeq \rH^2(S,\bZ) \oplus_{\perp} \bZ \delta,$$
and compatible maximal tori (both of which have rank $11$).}
\item{Identify the highest-weight irreducible $G_S$-representation 
$V_{S}(\lambda) \subset \rH^*(S^{[n]},\bR)$, which is a summand
of the restriction of an irreducible $V_{X}(\lambda) \subset \rH^*(X,\bR)$.  
Decompose $V_X(\lambda)$ into irreducible $G_S$-representations.}
\item{Repeat step two for $\rH^*(X,\bR)/V_{X}(\lambda)$ and subsequent quotients.}
\end{enumerate}

First consider $X=S^{[2]}$.  We have decompositions
$$\rH^*(S^{[2]})=A \oplus \Sym^2(A)$$
inducing
$$\begin{array}{rcl}
\rH^2(S^{[2]})&=& \rH^0(S) \oplus (\rH^0(S)\otimes \rH^2(S))={\bf 1}_S \oplus V_S(1,0,\ldots) \\
\rH^4(S^{[2]})&=& \rH^2(S) \oplus (\rH^0(S)\otimes \rH^4(S)) \oplus \Sym^2(\rH^2(S)) \\
	      &=& V_S(1,0,\ldots) \oplus {\bf 1}_S^{\oplus 2} \oplus V_S(2,0,\ldots)
\end{array}
$$
Let $V_{X}(2,0,\ldots,0)$ denote the highest-weight representation associated
to $\Sym^2(\rH^2(X))$ so that
$$\Sym^2(\rH^2(X))=V_{X}(2,0,\ldots) \oplus {\bf 1}_X.$$
The branching rule gives 
$$V_{X}(1,0,\ldots)= V_S(1,0,\ldots) \oplus {\bf 1}_S$$
and
$$V_{X}(2,0,\ldots)= V_S(2,0,\ldots) \oplus V_S(1,0,\ldots) \oplus 
{\bf 1}_S.$$
Therefore we obtain
$$\begin{array}{rcl}
\rH^2(X) &=& V_X(1,0,\ldots) \\
\rH^4(X) &=& V_X(2,0,\ldots)\oplus {\bf 1}_X. 
\end{array}
$$

Now consider $X=S^{[3]}$.
We have 
$$\rH^*(S^{[3]})= A \oplus (A\otimes A) \oplus \Sym^3(A)$$
inducing following decompositions (as described in \cite[Example 2.9]{LS}):
$$
\begin{array}{rcl}
\rH^2(S^{[3]}) &=& (\rH^0(S)^{\otimes 2}) \oplus (\rH^2(S)\otimes \rH^0(S)^{\otimes 2})  \\
		&=& {\bf 1}_S \oplus V_S(1,0\ldots) \\
\rH^4(S^{[3]}) &=& \rH^0(S) \oplus (\rH^0(S)\otimes \rH^2(S))^{\oplus 2} \\
& &\oplus (\Sym^2(\rH^2(S))\otimes \rH^0(S)) \oplus (\rH^4(S)\otimes \rH^0(S)^{\otimes 2}) \\
               &=& {\bf 1}_S^{\oplus 3} \oplus V_S(1,0,\ldots)^{\oplus 2} \oplus V_S(2,0,\ldots) \\ 
\rH^6(S^{[3]}) &=& \rH^2(S) \oplus (\rH^2(S)\otimes \rH^2(S))
  \oplus 
(\rH^0(S) \otimes \rH^4(S))^{\oplus 2} \\
& &\oplus \Sym^3(\rH^2(S)) \oplus (\rH^4(S) \otimes \rH^2(S) \otimes \rH^0(S)) \\
		&=& {\bf 1}_S^{\oplus 3} \oplus V_S(1,0,\ldots)^{\oplus 3} \oplus V_S(1,1,0,\ldots)\\ 
		& & \oplus V_S(2,0,\ldots) \oplus V_S(3,0,\ldots). 
\end{array}
$$
Let $V_{X}(1,1,0,\ldots)=\bigwedge^2 V_X(1,0,\ldots)$ and $V_X(3,0,\ldots)$
denote the highest weight representation in $\Sym^3(V_X(1,0,\ldots))$ so that
$$\Sym^3(V_X(1,0,\ldots))=V_X(3,0,\ldots) \oplus V_X(1,0,\ldots).$$
Therefore we obtain
$$\begin{array}{rcl}
\rH^2(X) &=& V_X(1,0,\ldots) \\
\rH^4(X) &=& V_X(2,0,\ldots) \oplus V_X(1,0,\ldots) \oplus {\bf 1}_X \\
\rH^6(X) &=& V_X(3,0,\ldots) \oplus V_X(1,1,0\ldots)\oplus V_X(1,0,\ldots) \oplus 
		{\bf 1}_X.
\end{array}
$$
The trivial factor in $\rH^4(X)$ corresponds to the Chern class $c_2(X)$;
our main task is to analyze the trivial factor in $\rH^6(X)$.

\section{Cohomology computations for length-three subschemes}
\label{sect:lengththree}

The general rule for multiplication in $A\{\fS_n\}$ is fairly complicated, so we will 
only give a formula in the case ($n=3$) we need.  The fact that $A$ only
has terms of even degree simplifies the expressions of \cite[2.17]{LS}:  

$$\begin{array}{rcl}
(\alpha_{\{1,2\}}\otimes \beta_{\{3\}})(12) \cdot 
(\gamma_{\{1,3\}} \otimes \delta_{\{2\}})(13) &=&
\alpha \beta \gamma \delta (132) \\
(\alpha_{\{1,2\}} \otimes \beta_{\{3\}})(12) \cdot
(\gamma_{\{1,2\}} \otimes \delta_{\{3\}})(12) &=&
\Delta_*(\alpha \gamma) \otimes (\beta \delta) (\mathrm{id}) \\
\alpha_{\{1,2,3\}} (123) \cdot \beta_{\{1,2,3\}}(123) &=&
 (\alpha \beta e)(132) \\
\alpha_{\{1,2,3\}}(123) \cdot \beta_{\{1,2,3\}}(132) &=& 
(\Delta_*(\alpha \beta))_{\{1,2,3\}} (\mathrm{id}),
\end{array}
$$
where $\Delta_*$ is the adjoint of the threefold multiplication
$A\otimes A \otimes A \ra A$.  

The remaining products can be deduced as formal consequences using the associativity
of the multiplication, e.g.,
$$\begin{array}{l}
(\alpha_{\{1,2\}} \otimes \beta_{\{3\}})(12) \cdot \gamma_{\{1,2,3\}}(132) \\
\quad = (\alpha_{\{1,2\}} \otimes \beta_{\{3\}})(12) \cdot (\gamma_{\{1,2\}} \otimes
1_{\{3\}})(12)\cdot (13) \\
\quad = (\Delta_*(\alpha \gamma)_{\{1,2\}} \otimes \beta_{\{3\}})(\mathrm{id}) \cdot
(1_{\{1,3\}} \otimes 1_{\{2\}})(13) \\
\quad = \alpha \beta \gamma (\Delta_*(1))_{\{1,3\}, \{2\}} (13),
\end{array}
$$
where $\alpha,\beta,$ and $\gamma$ act on the diagonal via either the first or second
variable.  Thus in particular
$$(12)\cdot (132)=(\Delta_*(1))_{\{1,3\},\{2\}}(13).$$  

We compute intersections among the absolute Hodge classes for $S^{[3]}$, 
i.e., classes that are Hodge for general K3 surfaces $S$.  From now on,
to condense notation we omit factors of the form $1_{\{i\}},1_{\{i,j\}}$, etc.
from our expressions.  

Based on the representation-theoretic analysis in Section~\ref{sect:representation},
we expect one independent classes in codimension one,
three in codimension two, and three in codimension three.
We have the unique divisor
$$\delta =  (12)+(13)+(23).$$
In codimension two, we have 
$$\begin{array}{rcl}
P &=& [\mathrm{pt}]_{\{1\}}+ [\mathrm{pt}]_{\{2\}} + [\mathrm{pt}]_{\{3\}} \\
Q &=& \sum_{j=1}^{22} 
{e_j}_{\{1\}} \otimes {e^{\vee}_j}_{\{2\}} +
{e_j}_{\{1\}} \otimes {e^{\vee}_j}_{\{3\}} +
{e_j}_{\{2\}} \otimes {e^{\vee}_j}_{\{3\}}  \\
R &=& (132)+(123).
\end{array}
$$
In codimension three, we have
$$\begin{array}{rcl}
U &=& [\mathrm{pt}]_{\{1,2\}} (12) + 
[\mathrm{pt}]_{\{1,3\}} (13) + 
[\mathrm{pt}]_{\{2,3\}} (23) \\
V &=& [\mathrm{pt}]_{\{3\}} (12) + 
[\mathrm{pt}]_{\{2\}} (13) + 
[\mathrm{pt}]_{\{1\}} (23) \\
W &=& \sum_{j=1}^{22} {e_j}_{\{1,2\}} \otimes {e_j^{\vee}}_{\{3\}} (12)  +
{e_j}_{\{1,3\}} \otimes {e_j^{\vee}}_{\{2\}} (13) +
{e_j}_{\{2,3\}} \otimes {e_j^{\vee}}_{\{1\}} (23).
\end{array}
$$
Thus we have 
$$\begin{array}{rcl}
\delta^2&=& (\Delta_*1)_{\{1,2\}}(12)+ (\Delta_*1)_{\{1,3\}}(13)+(\Delta_*1)_{\{2,3\}}(23)
				\\
        & & \mathbin{+} \,\, 3((132)+(123)) \\
	&=&-2P-Q+3R.
\end{array}
$$
Furthermore, we have
$$\begin{array}{rcl}
\delta \cdot P  &=& ((12)+(13)+(23))\cdot
	 ([\mathrm{pt}]_{\{1\}} + [\mathrm{pt}]_{\{2\}} + [\mathrm{pt}]_{\{3\}}) \\
		&=& 2U+V \\
\delta \cdot Q  &=& ((12)+(13)+(23))\cdot (\sum_{j=1}^{22}
{e_j}_{\{1\}} \otimes {e^{\vee}_j}_{\{2\}} +
{e_j}_{\{1\}} \otimes {e^{\vee}_j}_{\{3\}} \\
& & \quad \quad +\,\, {e_j}_{\{2\}} \otimes {e^{\vee}_j}_{\{3\}} ) \\
                &=& 22([\mathrm{pt}]_{\{1,2\}}(12) +[\mathrm{pt}]_{\{1,3\}}(13) + [\mathrm{pt}]_{\{2,3\}}(23)) \\
                & & +2 ( \sum_{j=1}^{22} {e_j}_{\{1,2\}} \otimes {e_j^{\vee}}_{\{3\}}  +
{e_j}_{\{1,3\}} \otimes {e_j^{\vee}}_{\{2\}} +
{e_j}_{\{2,3\}} \otimes {e_j^{\vee}}_{\{1\}}) \\
                &=& 22U + 2W \\
\delta \cdot R &=&((12)+(13)+(23))((132)+(123))\\
	       &=&2({\Delta_*1}_{\{1,2\},\{3\}}(12)+{\Delta_*1}_{\{1,3\},\{2\}}+{\Delta_*1}_{\{2,3\},\{1\}})\\
               &=& -2(U+V+W).
\end{array}
$$
We deduce then that
$$\delta^3=\delta(-2P-Q+3R)=-32U-8V-8W.$$

Finally, we compute the intersection pairing on the subspace of the middle cohomology spanned
by $U,V,$ and $W$.  Dimensional considerations give vanishing
$$U^2=V^2=U\cdot W=V\cdot W=0.$$
For the remaining numbers, we get
$$\begin{array}{rcl}
U\cdot V &=& 
([\mathrm{pt}]_{\{1,2\}} (12) + 
[\mathrm{pt}]_{\{1,3\}} (13) + 
[\mathrm{pt}]_{\{2,3\}} (23) ) \\
& & \quad \quad \cdot ([\mathrm{pt}]_{\{3\}} (12) +
[\mathrm{pt}]_{\{2\}} (13) +
[\mathrm{pt}]_{\{1\}} (23)) \\
&=&-3 [\mathrm{pt}]_{\{1\}} \otimes [\mathrm{pt}]_{\{2\}} \otimes [\mathrm{pt}]_{\{3\}} \mathrm{id} 
\end{array}
$$
and 
$$\begin{array}{rcl}
W^2 &=&
(\sum_{j=1}^{22} {e_j}_{\{1,2\}} \otimes {e_j^{\vee}}_{\{3\}} (12)  +
{e_j}_{\{1,3\}} \otimes {e_j^{\vee}}_{\{2\}} (13) +
{e_j}_{\{2,3\}} \otimes {e_j^{\vee}}_{\{1\}} (23))^2 \\
    &=& -3\cdot 22  \cdot 
 [\mathrm{pt}]_{\{1\}} \otimes [\mathrm{pt}]_{\{2\}} \otimes [\mathrm{pt}]_{\{3\}} \mathrm{id}.
\end{array}
$$

\begin{rema}
As a consistency check, we evaluate
$$ \begin{array}{rcl}
\delta^6&=&(-32U-8V-8W)^2=2^6 (8UV+W^2)\\
  &=&2^6 (-24-66)
 [\mathrm{pt}]_{\{1\}} \otimes [\mathrm{pt}]_{\{2\}} \otimes [\mathrm{pt}]_{\{3\}} \mathrm{id}.
\end{array}
$$
Using the formula for the point class (Equation~\ref{eqn:pointclass}), we obtain
$$\delta^6=-\frac{2^7 \cdot 3^2 \cdot 5}{2\cdot 3}=-2^6 \cdot 3 \cdot 5.$$
This is compatible with the Fujiki-type identity
$$D^6=15\left(D,D\right)^3, \quad D\in H^2(S^{[3]},\bQ),$$
as $\left(\delta,\delta\right)=-4$.  
\end{rema}

\section{Evaluation of the distinguished absolute Hodge class}
\label{sect:indecomp}

Let $S$ be a general K3 surface and $X$ a general deformation of $S^{[3]}$.  
The computations above show that the middle cohomology of $X$ admits
one Hodge class 
$$\rH^6(X,\bQ)\cap \rH^{3,3}(X)=\bQ \eta$$  
and the middle cohomology of $S^{[3]}$ admits three Hodge classes
$$\rH^6(S^{[3]},\bQ) \cap \rH^{3,3}(S^{[3]})= \bQ \eta \oplus \bQ \delta^3 \oplus  \bQ c_2(X)\delta.$$
Our goal is to compute the self-intersection of $\eta$, at least up to the square of a rational number.   
Note that $\eta$ is orthogonal to $\delta^3$ and $\delta c_2(X)$ under the intersection form,
by the analysis in Section~\ref{sect:representation}.  
The analysis here gives the one structure constant left open in \cite[Ex. 14]{MarkCrelle}.  

\begin{prop} \label{prop:eta}
Let $X$ be deformation equivalent to $S^{[3]}$, for $S$ a K3 surface.
Let $\eta\in \rH^6(X,\bQ)$ 
denote the unique (up to scalar) 
absolute Hodge class.
Then $\eta^2=-3\cdot 443$.  
\end{prop}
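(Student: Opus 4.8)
The plan is to produce an explicit rational basis for the space of absolute Hodge classes in $\rH^6(S^{[3]},\bQ)$, compute the full Gram matrix of the intersection pairing on that space, and then extract $\eta^2$ up to a rational square by identifying $\eta$ as the orthogonal complement of $\delta^3$ and $c_2(X)\delta$. The three-dimensional space of $\rH^{3,3}$-classes on $S^{[3]}$ is spanned by $U,V,W$, whose pairwise intersection numbers I already recorded: $U^2=V^2=U\cdot W=V\cdot W=0$, $U\cdot V=-3\,\mathrm{pt}$, $W^2=-66\,\mathrm{pt}$ (here $\mathrm{pt}$ is the class of a point on $S^{[3]}$, normalized by Equation~\ref{eqn:pointclass} so that $[\mathrm{pt}]_{\{1\}}\otimes[\mathrm{pt}]_{\{2\}}\otimes[\mathrm{pt}]_{\{3\}}(\mathrm{id})=3!\,[\mathrm{pt}_{S^{[3]}}]$, i.e.\ evaluating to $6$ under the degree map). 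So the pairing matrix in the basis $(U,V,W)$ is, up to the overall point normalization,
$$
\begin{pmatrix} 0 & -3 & 0 \\ -3 & 0 & 0 \\ 0 & 0 & -66 \end{pmatrix}.
$$

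Next I would express the two ``known'' Hodge classes $\delta^3$ and $c_2(X)\delta$ in the $(U,V,W)$-basis. I already have $\delta^3=-32U-8V-8W$ from Section~\ref{sect:lengththree}. The missing ingredient is $c_2(X)\delta$, equivalently $c_2(S^{[3]})$ in the Lehn--Sorger presentation; I would compute $c_2(S^{[3]})$ as an element of $A^{[3]}$ in codimension two (it lies in the span of $P,Q,R$), using the known formula for Chern classes of Hilbert schemes of surfaces (e.g.\ via Lehn's work on tautological classes, or directly from $c(S^{[3]})$ in Nakajima/Lehn--Sorger terms), then multiply by $\delta=(12)+(13)+(23)$ using the multiplication rules of Section~\ref{sect:lengththree} (the products $\delta\cdot P$, $\delta\cdot Q$, $\delta\cdot R$ are already computed there, so once $c_2$ is written in $P,Q,R$ the product $c_2(X)\delta$ is immediate as a combination of $U,V,W$). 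With $\delta^3$ and $c_2(X)\delta$ in hand as explicit vectors, $\eta$ is (any rational multiple of) the solution of the two linear equations $\langle\eta,\delta^3\rangle=0$, $\langle\eta,c_2(X)\delta\rangle=0$ against the Gram matrix above; solving the $2\times 2$ system and plugging back in gives $\eta^2$ as a rational number, and I expect the normalization to work out to $-3\cdot 443$ (note $443$ is prime, so this is already ``square-free'' and the statement ``up to a rational square'' is actually pinned down once we fix the integral generator of the rank-one lattice $\rH^6(X,\bZ)\cap\rH^{3,3}$ — but the proposition only claims it up to squares, which is what the linear algebra directly yields).

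The main obstacle is the computation of $c_2(S^{[3]})$ (equivalently $c_2(X)\delta$) in the Lehn--Sorger presentation with the correct sign and normalization conventions. The sign subtleties flagged in the Remark after Equation~\ref{eqnsignswitch} ($\Delta_*1=-[\Delta_S]$, $e(A)$ a negative multiple of the point class, $\mathrm{vol}$ off by a sign) mean one must be very careful translating the standard formula for $c_2$ of a Hilbert scheme into this framework; a sign error here propagates into the linear system and changes $\eta^2$. A useful internal consistency check, parallel to the $\delta^6$ check in the Remark of Section~\ref{sect:lengththree}, is to verify $c_2(X)\delta\cdot\delta^3$ and $(c_2(X)\delta)^2$ against independently known Chern-number/Fujiki-type identities for $S^{[3]}$, and to confirm that the resulting $\eta$ is genuinely non-degenerate (i.e.\ $\eta^2\neq 0$), which the appearance of the ``extra'' $+1$ in $\mathrm{coker}(\mu_{3,3})$ of dimension $\binom{23}{2}+1$ predicts. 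Once $c_2(X)\delta$ is correctly computed, the remainder is routine $3\times 3$ linear algebra over $\bQ$.
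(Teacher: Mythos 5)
Your overall skeleton is the same as the paper's: work inside $\mathrm{span}\{U,V,W\}$ with the Gram matrix $U\cdot V=-3$, $W^2=-66$ (all other products zero), cut out $\eta$ as the orthogonal complement of the two ``known'' Hodge classes, and read off $\eta^2$. The difference is the choice of the second known class. The paper never computes $c_2$: it takes $\delta^3$ together with $\delta\cdot P=2U+V$, declares $\mathrm{span}\{2U+V,\,V-W\}$ to be the decomposable classes, and sets $\eta=2U-V+11W$, whence $\eta^2=-4UV+121W^2=-3\cdot 443$ in two lines. You instead propose $\delta^3$ and $c_2(X)\delta$, which is exactly what the representation-theoretic justification (the subring generated by $\rH^2$ and Chern classes is orthogonal to $\eta$) actually licenses; note that this justification does \emph{not} by itself cover $\delta\cdot P$, since $P$ lies neither in the image of $\Sym^2\rH^2(S^{[3]})$ nor in the span of Chern classes, so your route and the paper's are interchangeable only if one verifies that $c_2(X)\delta\in\mathrm{span}\{\delta^3,\delta P\}$.

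The gap in your write-up is that the decisive step --- pinning down $c_2(X)\delta$ in the $(U,V,W)$ basis --- is only promised, and everything hinges on it. It is easier than you fear, and needs no Lehn-type Chern class formulas: by the multiplicity count of Section~\ref{sect:representation} the trivial $G_X$-summand of $\rH^4$ is one-dimensional and lies in the image of $\Sym^2\rH^2(X)$, so $c_2(X)$ is proportional to the image of the dual Beauville--Bogomolov form, i.e.\ lies in $\mathrm{span}\{\delta^2,\,\sum_j e_je_j^\vee\}=\mathrm{span}\{-2P-Q+3R,\;22P+2Q\}$, and the scalar is fixed by $c_2f^4=108(f,f)^2$. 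Carrying this out with the Section~\ref{sect:lengththree} products gives $c_2=30P+3Q-R$, which passes both of the consistency checks you propose ($c_2\delta^4=108(\delta,\delta)^2$ and $c_2^2\delta^2=1200(\delta,\delta)$ come out exactly). But then $c_2\delta=128U+32V+8W$, which does \emph{not} lie in the paper's plane $\mathrm{span}\{2U+V,\,V-W\}$, and orthogonality to $\{\delta^3,\,c_2\delta\}$ yields $\eta$ proportional to $4U-V$, whose square is a perfect square rather than $-3\cdot 443$ (up to squares). So, unless there is a convention I am missing, your method as described will not simply ``work out to $-3\cdot 443$'': it diverges from the paper's identification of the decomposables at precisely the point where the paper uses $\delta\cdot P$ without justification. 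Before you can claim the Proposition along your route you must resolve this tension explicitly --- either locate a sign/normalization error in the computation just sketched, or supply an argument (absent from the representation-theoretic remark) that $\delta\cdot P$ is orthogonal to $\eta$. As it stands, the proposal assumes the outcome of the one computation that carries all the content.
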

\begin{proof}
The argument relies heavily on the analysis in Section~\ref{sect:lengththree}.
We extract the decomposable classes in codimension three.  We have $\delta^3$ already and
$$\delta \cdot P=2U+V.$$
Hence the subspace $\mathrm{span}\{2U+V,V-W \}$ is spanned by decomposable classes and has orthogonal 
complement spanned by $2U-V+11W$.  Thus we have 
$$\eta=2U-V+11W$$
and 
$$\begin{array}{rcl}
\eta^2&=&-4UV+121W^2 \\
      &=&(12-121 \times 66)([\mathrm{pt}] \otimes [\mathrm{pt}] \otimes 
[\mathrm{pt}] )\mathrm{id} \\
      &=&-3 \cdot 443. 
\end{array}
$$
\end{proof}

\section{Proof of the main theorem} \label{sect:LTP}

We compute the cohomology class of a Lagrangian subspace $\bP^3 \subset X$,
where $X$ is deformation equivalent to the Hilbert scheme of length
three subschemes.  As we shall see, the formula for $[\bP^3]$ involves
only decomposable classes, and not the absolute Hodge class $\eta$:

\begin{lemm}
Let $\bP^n \subset X$ be embedded in a general irreducible holomorphic
symplectic variety of dimension $2n$.  Then we have
$$c_{2j}(\cT_X|\bP^n)=(-1)^jh^{2j}\binom{n+1}{j},$$
where $h$ is the hyperplane class.
\end{lemm}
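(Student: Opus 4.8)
The plan is to compute the total Chern class of $\cT_X|_{\bP^n}$ by exploiting the Lagrangian condition, which pins down the restriction of the tangent bundle completely. The key observation is that $\bP^n \subset X$ is Lagrangian, meaning the holomorphic symplectic form $\sigma$ identifies the normal bundle $N_{\bP^n/X}$ with the cotangent bundle $\Omega^1_{\bP^n}$. Hence we have a short exact sequence
$$0 \ra \cT_{\bP^n} \ra \cT_X|_{\bP^n} \ra N_{\bP^n/X} \simeq \Omega^1_{\bP^n} \ra 0,$$
so that in $\rH^*(\bP^n,\bQ)$ we get the identity of total Chern classes
$$c(\cT_X|_{\bP^n}) = c(\cT_{\bP^n})\cdot c(\Omega^1_{\bP^n}).$$

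Next I would plug in the Euler sequences. Writing $h$ for the hyperplane class on $\bP^n$, one has $c(\cT_{\bP^n}) = (1+h)^{n+1}$ and $c(\Omega^1_{\bP^n}) = (1-h)^{n+1}$, since $\Omega^1_{\bP^n}$ fits in $0 \ra \Omega^1_{\bP^n} \ra \cO(-1)^{\oplus(n+1)} \ra \cO \ra 0$. Therefore
$$c(\cT_X|_{\bP^n}) = (1+h)^{n+1}(1-h)^{n+1} = (1-h^2)^{n+1} = \sum_{j=0}^{n} (-1)^j \binom{n+1}{j} h^{2j},$$
using that $h^{n+1}=0$ in $\rH^*(\bP^n,\bQ)$. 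Reading off the degree-$2j$ part gives $c_{2j}(\cT_X|_{\bP^n}) = (-1)^j \binom{n+1}{j} h^{2j}$, which is exactly the claimed formula; note all odd Chern classes vanish, consistent with the holomorphic symplectic structure.

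**The main subtlety**, rather than a genuine obstacle, is justifying that the normal bundle is $\Omega^1_{\bP^n}$ rather than merely abstractly isomorphic to it in a way that could twist the Chern classes — but since $\Omega^1_{\bP^n}$ has no nontrivial twists that preserve its Chern class over $\bP^n$ (and the symplectic pairing gives a genuine bundle isomorphism $N_{\bP^n/X}\simeq \Omega^1_{\bP^n}$, not just up to a line bundle), this is immediate. One should also remark that the hypothesis "general irreducible holomorphic symplectic variety" is not actually needed for this lemma: the computation works for any $\bP^n$ embedded as a Lagrangian subvariety of any holomorphic symplectic manifold of dimension $2n$. The result will feed into the computation of $[\bP^3]$ in the next step, where the restriction of $\ch(\cO_{\bP^3})$ and the Todd class will be expressed via these Chern numbers.
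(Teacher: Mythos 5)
Your proposal is correct and follows exactly the paper's argument: the normal bundle sequence $0 \ra \cT_{\bP^n} \ra \cT_X|_{\bP^n} \ra \cN_{\bP^n/X} \ra 0$ together with the Lagrangian identification $\cN_{\bP^n/X}\simeq \cT_{\bP^n}^{\vee}$, followed by the computation $c(\cT_X|_{\bP^n})=(1+h)^{n+1}(1-h)^{n+1}=(1-h^2)^{n+1}$. You have simply written out the Chern class calculation that the paper leaves implicit, so there is nothing to change.
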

This is proved using the exact sequence
$$0 \ra \cT_{\bP^n} \ra \cT_X|\bP^n \ra \cN_{\bP^n/X} \ra 0$$
and 
$$\cN_{\bP^n/X}\simeq \cT_{\bP^n}^{\vee},$$
reflecting the fact that $\bP^n$ is a Lagrangian subvariety of $X$.

Regarding 
$$
\rH^2(X,\bZ) \subset \rH_2(X,\bZ)
$$
as a subgroup of index four, we can express
$\ell=\lambda/4$ for some divisor class $\lambda \in \rH^2(X,\bZ)$.  
(This might not be primitive.)

Given a deformation of $X$ such that $\lambda$ remains algebraic,
the subvariety $\bP^3$ deforms as well \cite{HTGAFA99}.  
Without loss of generality, we can deform $X$ to a variety containing 
a $\bP^3$, but otherwise having a general Hodge structure.  In particular,
we have a injection 
$$\Sym(\rH^2(X,\bQ)) \hookrightarrow \rH^*(X,\bQ).$$
We expect to be able to write
$$\left[ \bP^3 \right]=a\lambda c_2(X) + b \lambda^3 + d \eta$$
for some $a,b,d \in \bQ$.  

Furthermore, the Fujiki relations
\cite{Fujiki} imply that for each $f\in \rH^2(X,\bZ)$,
$$f^6= e_0 \left(f,f\right)^3, \quad
c_2(X)f^4= e_2 \left(f,f\right)^2, \quad
c_4(X)f^2 = e_4 \left(f,f\right)
$$
for suitable rational constants $e_0,e_2,e_4$.  
Precisely, we have \cite{EGL}
$$c_2^2(X)f^2=\frac{5}{2}c_4(X)f^2.$$

The Riemann-Roch formula gives
$$\chi(\cO_X(f))=\frac{f^6}{6!}+ \frac{c_2(X)f^4}{12 \cdot 4!}+
\frac{f^2(3c_2^2-c_4)}{720 \cdot 2!}+4.$$
On the other hand, we know that 
$$\chi(\cO_X(f))=\frac{1}{3!2^3}(\left(f,f\right)+8)(\left(f,f\right)+6)
					(\left(f,f\right)+4).$$
Perhaps the quickest way to check this formula is to observe that
if $X=S^{[3]}$ and $f$ is a very ample divisor on $S$ with
no higher cohomology then the induced sheaf $\cO_X(f)$ has
no higher cohomology and 
$$\dim \Gamma(\cO_X(f))=\dim \Sym^3(\Gamma(\cO_S(f)))=
\binom{\chi(\cO_S(f))+2}{3}.$$
Equating coefficients, we find
$$\begin{array}{rcl}
f^6 &=& 15 \left(f,f\right)^3 \\
f^4c_2 &=& 108 \left(f,f\right)^2 \\
f^2c_4 &=& 480 \left(f,f\right) \\
f^2c_2^2 &=& 1200 \left(f,f\right)
\end{array}
$$

We generate Diophantine equations for $a,b,\left(\lambda,\lambda\right)$
and eventually, $d$.
First, observe that
$$\left(\lambda,\ell \right)=\lambda\cdot \ell =\deg \lambda|\bP^3$$
so that $\lambda|\bP^3$ is $\left(\lambda,\lambda \right)/4$ times
the hyperplane class.  Thus we have
$$\left[\bP^3 \right]\lambda^3 = (\left(\lambda,\lambda \right)/4)^3$$
and
$$\left[\bP^3 \right]\lambda^3 = a\lambda^4c_2(X)+b\lambda^6.$$
Equating these expressions and evaluating the terms, we find
$$\left(\lambda,\lambda\right) (15 b -1/64)+108a=0.$$
We have divided out by $\left(\lambda,\lambda \right)$;  the
solution $\left(\lambda,\lambda\right)=0$ is not possible 
for geometric reasons, and we shall
exclude it algebraically below. 

Second, the Lemma on restrictions of Chern classes implies
$$\left[\bP^3 \right] \lambda c_2(X)=-\left(\lambda,\lambda\right)$$
whereas the formula for the class of $\bP^3$ yields
$$\left[\bP^3 \right]\lambda c_2(X)=a\lambda^2c_2(X)^2 + b \lambda^4 c_2(X).$$
Thus we obtain
$$108 b \left(\lambda,\lambda \right) + (1200 a  + 1)=0.$$

\begin{rema}
The cup product of $\rH^*(X)$ is compatible with the $G_X$-action, so the
subring generated by Chern classes and elements of $\rH^2(X)$ is orthogonal
to $\eta$.  Thus even if the decomposition of $[\bP^3]$
were to involve $\eta$, the computations up to this point would not reflect this.
\end{rema}

Finally, the fact that 
$$ \left[\bP^3 \right]^2=c_3(\cN_{\bP^3/X})=c_3(\cT_{\bP^3}^{\vee})=-4$$
yields the {\em cubic} equation
$$15b^2 \left(\lambda,\lambda\right)^3 + 216 ab \left(\lambda,\lambda\right)^2
+ 1200 \left(\lambda,\lambda \right)a^2 + d^2 \eta \cdot \eta =-4.$$
Proposition~\ref{prop:eta} implies that $\eta \cdot \eta = -11\cdot 443$.  
In particular, $\left(\lambda,\lambda\right)=0$ is excluded.

Eliminating $a$ and $b$ from these equations and setting $L=\left(\lambda,\lambda\right)$,
we obtain
\begin{equation} \label{eqn:elliptic}
2^{14}\cdot 3^2\cdot 11\cdot 443 d^2 = 5^2 L^3 + 2^5\cdot 3^2 L^2 + 2^8\cdot 5 L + 2^{16}\cdot 3\cdot 11.
\end{equation}
We know, {\em a priori}, that $L \in \bZ$ and $d\in \bQ$.  

\begin{prop}
The only solution to (\ref{eqn:elliptic}) with $L \in \bZ$ and $d\in \bQ$ is $d=0$ and $L=-48$.
\end{prop}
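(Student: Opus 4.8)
The plan is to recognize (\ref{eqn:elliptic}) as the affine model of an elliptic curve for which the known solution supplies a rational two-torsion point. Since $L=-48$ kills the right-hand side of (\ref{eqn:elliptic}), the cubic factors over $\bZ$ as
$$5^2L^3+2^5\cdot 3^2L^2+2^8\cdot 5\,L+2^{16}\cdot 3\cdot 11=(L+48)\bigl(25L^2-912L+45056\bigr).$$
The quadratic factor has discriminant $912^2-4\cdot 25\cdot 45056=-3673856<0$, hence is strictly positive on $\bR$. As the left side of (\ref{eqn:elliptic}) is $\ge 0$, we must have $L\ge -48$, and $L=-48$ forces $d=0$; so it remains only to rule out integers $L\ge -47$.

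\textbf{Passage to an elliptic curve.} Write $2^{14}\cdot 3^2=(2^7\cdot 3)^2$ and $4873:=11\cdot 443$, which is squarefree; then $d\in\bQ$ forces $e:=2^7\cdot 3\,d\in\bZ$, and with $M=L+48\ge 1$ we are looking at the \emph{integral} equation $4873\,e^2=M\bigl(25M^2-3312M+146432\bigr)$, where $\gcd\bigl(M,\,25M^2-3312M+146432\bigr)$ divides $146432=2^{10}\cdot 11\cdot 13$. Multiplying through by $25^2\cdot 4873^3$ and substituting $x=25\cdot 4873\,M$, $y=25\cdot 4873^2\,e$ puts this in Weierstrass form
$$y^2=x\bigl(x^2-3312\cdot 4873\,x+2^{10}\cdot 5^2\cdot 11\cdot 13\cdot 4873^2\bigr),$$
an elliptic curve $E/\bQ$; the solution $M=0$ is the two-torsion point $T=(0,0)$, and the remaining quadratic factor is irreducible, so $T$ is the only rational point of order two. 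A solution with $L\ge -47$ would be a rational point of $E$ other than $O$ and $T$ (indeed one with positive $x$-coordinate), so it suffices to prove $E(\bQ)=\{O,T\}$.

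\textbf{Determining $E(\bQ)$.} This I would do in the usual two steps. First, $E(\bQ)_{\mathrm{tors}}$: reducing $E$ modulo several primes of good reduction and taking the $\gcd$ of the orders $\#E(\bF_p)$ should pin the torsion subgroup down to $\bZ/2\bZ=\{O,T\}$. Second, $\rank E(\bQ)=0$: carry out the descent via the two-isogeny with kernel $\{O,T\}$, together with the dual isogeny on the isogenous curve. Writing $A=-3312\cdot 4873$ and $B=2^{10}\cdot 5^2\cdot 11\cdot 13\cdot 4873^2$ for the coefficients, this amounts to showing that, among the genus-one homogeneous spaces indexed by the squarefree divisors of $B$ and of $A^2-4B=-2^8\cdot 113\cdot 127\cdot 4873^2$, only the trivial ones have rational points, every nontrivial one being excluded by an obstruction over a suitable $\bQ_p$. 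Combining the two steps yields $E(\bQ)=\{O,T\}$, hence $d=0$ and $L=-48$ is the unique solution of (\ref{eqn:elliptic}).

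\textbf{Main obstacle.} The real work is the descent. The coefficients of $E$ are of size $\approx 10^{13}$, far outside the standard tables, so although the descent can be automated it must in the end rest on explicit local computations: for each homogeneous space one must exhibit a prime $p$ — among $2,5,11,13,443$ and the primes $113,127$ entering through $A^2-4B$ — at which the pertinent quadratic-residue condition fails. Verifying that these local obstructions are all present, and in particular that the prime $443$, which came in through $\eta\cdot\eta$, does the bookkeeping it must, is the crux of the argument.
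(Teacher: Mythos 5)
Your reduction and change of variables are fine (your Weierstrass model is the paper's curve $E:y^2=x^3+ax^2+bx$, $a=-3^2\cdot 11\cdot 23\cdot 443$, $b=2^2\cdot 5^2\cdot 11^3\cdot 13\cdot 443^2$, rescaled by $(x,y)\mapsto(2^4x,2^6y)$), but the central claim of your strategy is false: this curve does \emph{not} have rank $0$. The paper shows $E(\bQ)\cong\bZ\times(\bZ/2\bZ)$, with the free part generated by a point $P$ of infinite order whose coordinates have about $30$ digits (found with mwrank; it is hard to find, but easy to verify). Consequently the two-isogeny descent you propose cannot succeed in showing $E(\bQ)=\{O,T\}$: among the homogeneous spaces $C_\delta$ indexed by squarefree divisors of $b$, the one with $\delta=2$ has a rational point (coming from $P$), so it is everywhere locally solvable and no quadratic-residue obstruction at $2,5,11,13,113,127,443$ will eliminate it. Your "main obstacle" paragraph is therefore aimed at a computation that, if carried out correctly, returns rank bound $1$, not $0$.

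Because the rank is $1$, proving the Proposition requires a second, genuinely separate step that your proposal does not contain: showing that no point $nP+kQ$ with $n\neq 0$ gives an admissible solution, i.e.\ an integral-point analysis on a rank-one curve. The paper does this (its Proposition on integral points) by a divisibility argument on the denominators $e(nP+kQ)$: $7\mid e(P)$ handles $k=0$; $3\mid e(P+Q)$ handles $k=1$, $n$ odd; $79\mid e(2P+Q)$ handles $n\equiv 2\pmod 4$; and for $n\equiv 0\pmod 4$ one shows $\alpha(2^jP)\equiv\pm4\pmod 7$ to produce a prime $q\not\equiv1\pmod 7$ of good reduction dividing $e(nP+Q)$, all resting on the no-cancellation property of the duplication formula at primes of nonsingular reduction. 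So the gap is twofold: the rank-$0$ assertion is wrong, and even after correcting it to rank $1$ (which itself requires exhibiting and verifying $P$ and completing the descent to show $\langle P,Q\rangle$ has odd index and then index $1$), you still need an argument of this kind to exclude all nontrivial integral points; your positivity observation $L\geq-48$ does not substitute for it.
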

We assume this for the moment;  its proof can be found in Section~\ref{sect:DA}.

Back-substitution yields
$$a=1/96, \quad b=1/384, \quad \left(\ell,\ell\right)=-3.$$
We claim that $\lambda/2 \in \rH^2(X,\bZ)$, i.e., $\lambda$ is not
primitive.  Using the isomorphism
$$\rH_2(X,\bZ)=\rH_2(S,\bZ) \oplus_{\perp} \bZ \delta^{\vee}, \quad
\left(\delta^{\vee},\delta^{\vee}\right)=-1/4$$
we can express
$$\ell=D+m\delta^{\vee}, \quad D \in \rH_2(S,\bZ),m \in \bZ.$$
If $\lambda$ were primitive then $m$ would have to be odd and
$$-3=\left(\ell,\ell\right)=\left(D,D\right) - m^2 /4.$$
Since $\left(D,D\right) \in 2\bZ$, we have a contradiction.

\section{Diophantine analysis}
\label{sect:DA}

\begin{theo}
The only solution to
 \[ 2^{14}\dt 3^2\dt 11\dt443 d^2 = 5^2 L^3 + 2^5\dt 3^2 L^2 + 2^8\dt 5 L + 2^{16}\dt3\dt11 \]
with $L \in \bZ$ and $d \in \bQ$ is $L = -48$, $d = 0$.
\end{theo}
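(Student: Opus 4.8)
The plan is to reduce the equation to a genuine elliptic curve, determine its Mordell--Weil group, and then enumerate integral points. First I would clear the constant by multiplying through by a suitable power of $5$: writing $Y = 5^2\cdot 2^7\cdot 3\cdot d$ and $x = 5L$, the right-hand side $5^2L^3 + 2^5\cdot 3^2 L^2 + 2^8\cdot 5 L + 2^{16}\cdot 3\cdot 11$ becomes, after multiplication by $5$, a monic cubic in $x = 5L$, namely $x^3 + 2^5\cdot 3^2 x^2 + 2^8\cdot 5^2 x + 2^{16}\cdot 3\cdot 5^2\cdot 11$; and $2^{14}\cdot 3^2\cdot 11\cdot 443\cdot 5\cdot d^2$ on the left must be matched to a square. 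So I would introduce $Y^2 = c\cdot(\text{monic cubic in } x)$ for the appropriate constant $c$ absorbing $2^{14}\cdot 3^2\cdot 11\cdot 443\cdot 5$, then make the standard substitution $Y \mapsto Y/c$, $x \mapsto x/c$ to arrive at a Weierstrass equation $Y^2 = X^3 + AX^2 + BX + C$ with integer coefficients $A,B,C$. This is routine algebra; the point is simply to land on a specific curve $E/\bQ$.

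Next I would compute the arithmetic invariants of $E$: its discriminant, conductor, and (via descent or a call to standard tables/software such as \texttt{mwrank} or \texttt{Magma}) its rank and torsion subgroup. I expect the rank to be $0$, so that $E(\bQ)$ is finite and equals its torsion subgroup; the torsion is then pinned down by the Nagell--Lutz theorem or by reduction modulo several small primes of good reduction. Having the full finite list of rational points on $E$, I would translate each one back through the substitutions to recover the candidate pairs $(L,d)$, check the integrality constraint $L \in \bZ$ (equivalently $5 \mid x$ and the denominators clear), and verify that the only survivor is $(L,d) = (-48,0)$. The point $(L,d)=(-48,0)$ corresponds to a $2$-torsion point on $E$ (a root of the cubic), so its presence is expected and serves as a sanity check on the reduction.

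The main obstacle is the rank computation: showing $\mathrm{rank}\, E(\bQ) = 0$ is the one step that is not pure bookkeeping, since a descent over $\bQ$ could in principle require understanding the $2$-Selmer group and ruling out phantom classes, and the presence of the large prime $443$ in the coefficients makes the relevant ideal class groups and unit computations less transparent. If a direct $2$-descent does not immediately give rank $0$, the fallback is to bound the rank using the $2$-Selmer group together with a computation of the Shafarevich--Tate group's $2$-part, or simply to cite a machine verification. Once the rank is known to be zero, the remainder---listing torsion, back-substituting, and discarding the non-integral or spurious solutions---is entirely mechanical, and I would present it as a short finite check culminating in $L=-48$, $d=0$.
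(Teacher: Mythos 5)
Your reduction-to-an-elliptic-curve step is fine in spirit (the paper does the same, via $x = 2^{-4}\cdot 5^2\cdot 11\cdot 443\,(L+48)$, $y = 2\cdot 3\cdot 5^2\cdot 11^2\cdot 443^2\, d$, landing on $y^2 = x^3 + ax^2 + bx$ with $a = -3^2\cdot 11\cdot 23\cdot 443$, $b = 2^2\cdot 5^2\cdot 11^3\cdot 13\cdot 443^2$), although your specific normalization is off: multiplying by $5$ and setting $x=5L$ does not give integral coefficients (you would need to multiply by $5^4$ with $x=25L$, or shift by $48$ first as the paper does). That is cosmetic. The genuine gap is your central expectation that the rank is $0$. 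It is not: the Mordell--Weil group of this curve is $\bZ \times \bZ/2\bZ$, with torsion generated by $Q=(0,0)$ (your $(L,d)=(-48,0)$) and free part generated by a point $P$ whose coordinates have about $30$ digits --- a point that the standard rank routines in \texttt{Sage} and \texttt{Magma} fail to find, though \texttt{mwrank} does. Consequently your plan's punchline, ``having the full finite list of rational points, back-substitute and check integrality,'' never gets off the ground, and your fallback (Selmer group plus the $2$-part of Sha, or citing machine verification) can only confirm that the rank is $1$; it cannot restore finiteness of $E(\bQ)$.

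What is actually needed, and what your proposal has no mechanism for, is an argument about $S$-integral points on a rank-one curve: since $L\in\bZ$, the substitution only requires ruling out solutions with $x,y\in\bZ[\tfrac12]$ other than $(0,0)$. The paper does this by hand: after pinning down $E(\bQ)=\langle P\rangle\times\langle Q\rangle$ by a $2$-descent (including a nontrivial search for rational points on the quartic torsors, one of which carries the large point $P$), it shows that for every $n\neq 0$ and $k\in\{0,1\}$ the denominator $e(nP+kQ)$ of $x(nP+kQ)$ is divisible by an odd prime --- $7$ when $k=0$, $3$ when $k=1$ and $n$ odd, $79$ when $n\equiv 2 \pmod 4$, and for $n\equiv 0\pmod 4$ a prime $q\not\equiv 1\pmod 7$ extracted from a congruence $\alpha(2^jP)\equiv\pm4\pmod 7$ obtained from the duplication formula and a no-cancellation property at primes of nonsingular reduction. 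Without an argument of this type (or, say, elliptic-logarithm bounds for $\{2\}$-integral points), your proof cannot conclude, so as written it does not establish the theorem.
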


\begin{proof}
Put $x = 2^{-4}\dt5^2\dt11\dt443 (L + 48)$ and $y = 2\dt3\dt5^2\dt11^2\dt443^2d$. The equation then takes the form
\begin{equation}
\label{eq:weierstrass}
 E: y^2 = x^3 + ax^2 + bx
\end{equation}
where
 \[ a = -3^2\dt11\dt23\dt443, \qquad b = 2^2\dt5^2\dt11^3\dt13\dt443^2. \]
It suffices to prove the stronger statement that there are no solutions to \eqref{eq:weierstrass} with $x, y \in \bZ[\frac12]$, apart from $x = y = 0$.

The proof is given in two steps. Proposition \ref{prop:MW} below determines explicitly the structure of the Mordell--Weil group $E(\bQ)$. Proposition \ref{prop:integral} then identifies the integral points.
\end{proof}

Algorithms for both of these steps are implemented in computer algebra systems such as {\tt Sage} \cite{sage-4.4.1} and {\tt Magma} \cite{magma}, and the theorem may be verified this way. To avoid depending on the correctness of these systems, we give alternative proofs that use as little machine assistance as possible. The only step that is perhaps unreasonable to verify by hand is that a certain point $P$ with large coordinates (about $30$ digits) lies in $E(\bQ)$.

We first set notation and briefly recall some facts about point multiplication on elliptic curves. Let $O$ denote the zero element of $E(\bQ)$ (the point at infinity). For nonzero $R \in E(\bQ)$ we write
 \[ R = (x(R), y(R)) = \left( \frac{\alpha(R)}{e(R)^2}, \frac{\beta(R)}{e(R)^3} \right), \]
where $\alpha, \beta, e \in \bZ$, $e \geq 1$ and $(\alpha, e) = (\beta, e) = 1$.

Let $R \in E(\bQ)$, $R \neq O$. If $p$ is a prime, then $p \divides e(R)$ if and only if $R$ reduces to the identity in $E(\bF_p)$. If $m \geq 1$ and $mR \neq O$, then $e(R) \divides e(mR)$. For $m = 2$ we have the following formula:
\begin{equation}
\label{eq:double}
 x(2R) = \frac{\alpha(2R)}{e(2R)^2} = \frac{(\alpha(R)^2 - b \dt e(R)^4)^2}{4e(R)^2\big(\alpha(R)^3 + a \dt \alpha(R)^2 e(R)^2 + b \dt \alpha(R) e(R)^4\big)}.
\end{equation}
Moreover, if $R$ reduces to a nonsingular point in $E(\bF_p)$, then $p$ cannot divide both the numerator and denominator of the fraction on the right side of \eqref{eq:double}. In other words, there is no cancellation locally at $p$. One proof of this is given in \cite[Prop.~IV.2]{Wut-thesis}; as pointed out in that paper, it can also be proved from properties of real-valued non-archimedean local heights.

The discriminant of the Weierstrass equation \eqref{eq:weierstrass} is given by
 \[ \Delta = 16b^2(a^2 - 4b) = -2^8 \dt 5^4 \dt 11^8 \dt 13^2 \dt 113 \dt 127 \dt 443^6, \]
so the model is minimal, and the primes of bad reduction are $2$, $5$, $11$, $13$, $113$, $127$ and $443$. For $p = 2, 5, 11, 13, 443$, we have that $p \divides \alpha(R)$ if and only if $R$ reduces to a singular point of $E(\bF_p)$, i.e.~the only singular point of $E(\bF_p)$ is $(0:0:1)$ for these primes. The point $Q = (0, 0)$ has order two, and addition with $Q$ is given by the formula
\begin{equation}
\label{eq:add-Q}
 R + Q = \left(\frac{b}{x(R)}, \frac{-b \dt y(R)}{x(R)^2}\right).
\end{equation}

\begin{prop}
\label{prop:MW}
We have $E(\bQ) \cong \bZ \times (\bZ/2\bZ)$, where the free part is generated by the point $P$ with coordinates
 \[ \left(\frac{2 \dt 3^2 \dt 11^2 \dt 83^2 \dt 443^2 \dt 6481^2}{7^4 \dt 41^2 \dt 71^2 \dt 193^2}, \frac{2 \dt 3 \dt 11^3 \dt 31 \dt 83 \dt 163 \dt 443^2 \dt 6481 \dt 240623 \dt 3691717}{7^6 \dt 41^3 \dt 71^3 \dt 193^3} \right) \]
and the torsion part by $Q = (0, 0)$.
\end{prop}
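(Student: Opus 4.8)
The plan is to determine the torsion subgroup by reduction modulo a good prime, to bound $\mathrm{rank}\,E(\bQ)$ from above by a descent through the $2$-isogeny with kernel $\langle Q\rangle$, and then to use the point $P$ both to meet that bound and, after saturation, to exhibit a generator of the free part. For the torsion: one checks $a^2-4b=-11^2\dt443^2\dt113\dt127$, which is negative and hence not a square, so $x^2+ax+b$ is irreducible over $\bQ$ and $E(\bQ)[2]=\{O,Q\}$ with $Q=(0,0)$. Since $3\nmid\Delta$ and the kernel of reduction modulo $3$ is torsion-free, reduction is injective on $E(\bQ)_{\mathrm{tors}}$; as $x^3+ax^2+bx\equiv x^3-x\pmod3$ one finds $E(\bF_3)=\{O,(0,0),(1,0),(2,0)\}\cong(\bZ/2\bZ)^2$, so $E(\bQ)_{\mathrm{tors}}\hookrightarrow(\bZ/2\bZ)^2$, and with the irreducibility above this forces $E(\bQ)_{\mathrm{tors}}=\langle Q\rangle\cong\bZ/2\bZ$. (Reduction at $p=7$ gives an independent check.)

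For the rank upper bound, let $\phi\colon E\to E'$ be the $2$-isogeny with kernel $\langle Q\rangle$, so $E'\colon y^2=x^3+a'x^2+b'x$ with $a'=-2a$ and $b'=a^2-4b=-11^2\dt443^2\dt113\dt127$, and let $\hat\phi\colon E'\to E$ be the dual. I would use the classical $2$-isogeny descent: the maps
\[
\alpha\colon\frac{E(\bQ)}{\hat\phi E'(\bQ)}\hookrightarrow\frac{\bQ^\times}{(\bQ^\times)^2},\qquad
\alpha'\colon\frac{E'(\bQ)}{\phi E(\bQ)}\hookrightarrow\frac{\bQ^\times}{(\bQ^\times)^2}
\]
send an affine point to its $x$-coordinate modulo squares, send $(0,0)$ to a nontrivial class, and have images in the subgroups of $\bQ^\times/(\bQ^\times)^2$ generated by $-1$ together with the primes $2,5,11,13,443$ dividing $b$, resp.\ $-1$ together with the primes $11,113,127,443$ dividing $b'$. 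For squarefree $d$ in the ambient group, $d\in\mathrm{im}\,\alpha$ iff the quartic
\[
C_d\colon d\,w^2=d^2z^4+a\,d\,z^2u^2+b\,u^4
\]
has a rational point, while $d\in\mathrm{Sel}^{\hat\phi}(E)$ iff $C_d$ is soluble over $\bR$ and over $\bQ_p$ for each prime of bad reduction $p\in\{2,5,11,13,113,127,443\}$; the analogous quartics built from $a',b'$ govern $\mathrm{Sel}^{\phi}(E')$. These local solubility tests determine the two Selmer groups, and then
\[
\mathrm{rank}\,E(\bQ)=\dim_{\bF_2}\frac{E(\bQ)}{\hat\phi E'(\bQ)}+\dim_{\bF_2}\frac{E'(\bQ)}{\phi E(\bQ)}-2\le\dim_{\bF_2}\mathrm{Sel}^{\hat\phi}(E)+\dim_{\bF_2}\mathrm{Sel}^{\phi}(E')-2 .
\]
Each Selmer dimension is at least $1$ (from the $2$-torsion), and the aim is to check that the right-hand side equals $1$.

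For the lower bound I would verify directly that $P$ lies on $E$: writing $x(P)=\alpha/e^2$, $y(P)=\beta/e^3$ in lowest terms (the displayed coordinates), this is the integer identity $\beta^2=\alpha^3+a\alpha^2e^2+b\alpha e^4$, and this single large computation is the one step I would delegate to a machine. As $x(P)$ is not an integer whereas both torsion points $O,(0,0)$ have integral $x$-coordinate, $P$ has infinite order, so $\mathrm{rank}\,E(\bQ)\ge1$; with the upper bound, $\mathrm{rank}\,E(\bQ)=1$ and $E(\bQ)\cong\bZ\times\bZ/2\bZ$. To see $P$ generates the free part modulo torsion, write $P=nP_0+t$ with $P_0$ a generator and $t\in\langle Q\rangle$; the numerator of $x(P)$ has squarefree part $2$, so $\alpha(P)=2\ne1$, and since $2E(\bQ)=\hat\phi\phi E(\bQ)\subseteq\hat\phi E'(\bQ)=\ker\alpha$ this gives $P\notin2E(\bQ)$, i.e.\ $n$ is odd. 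To rule out $n\ge3$ I would compute $\hat h(P)$ from its local contributions, invoke an explicit bound $h(x(R))\le2\hat h(R)+C$ with $C$ depending only on the Weierstrass coefficients, conclude that $n\ge3$ would force $x(P_0)$ to have height at most $\tfrac{2}{9}\hat h(P)+C$, and eliminate this by a finite search organized through the finitely many soluble quartics $C_d$; hence $n=1$ and $E(\bQ)=\langle P,Q\rangle$.

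The heaviest part, and the one most likely to cause difficulty, is the rank upper bound: the descent must be executed over all seven primes of bad reduction together with the sign, which is a lengthy local computation, and nothing a priori guarantees that the resulting Selmer bound is $1$ rather than larger owing to nonzero $\phi$- or $\hat\phi$-parts of the Tate--Shafarevich groups; should that happen, one would need to prove the insolubility of the offending everywhere-locally-soluble quartics, for instance by a second descent. The saturation step is also awkward to carry out by hand, since a crude canonical-height lower bound may not by itself exclude $n=3$; as a fallback one can bound $e(P_0)$ using \eqref{eq:double} and the absence of local cancellation and then check the finitely many candidates for $P_0$ directly.
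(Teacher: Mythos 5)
Your framework is the same as the paper's (torsion by reduction, lower bound from the machine-verified point $P$, upper bound by descent through the $2$-isogeny with kernel $\langle Q\rangle$), and the torsion argument and the $P\in E(\bQ)$ step are fine. The genuine gap is the rank bound. You propose to compute $\mathrm{Sel}^{\hat\phi}(E)$ and $\mathrm{Sel}^{\phi}(E')$ purely by local solubility of the quartics at $\bR$ and the bad primes, and then "aim to check" that the resulting bound is $1$. It is not: on the $E$-side, local conditions only dispose of the negative $\delta$ and those divisible by $5$; the classes $\delta=11$, $443$ and $11\dt443$ are not killed by any local test, which is precisely why the paper does not stop at local solubility but proves $C_\delta(\bQ)=\emptyset$ for these three $\delta$ by a global argument: it parameterizes the associated conic through an explicit rational point, reduces $z^2=f(X,Y)/g(X,Y)$ to the pair $f=\eps Z^2$, $g=\eps W^2$ with $\eps\divides 11\dt113\dt127\dt443\,t$, and eliminates every $\eps$ by quadratic-residue computations at $11$, $13$ and $443$. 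This second descent is the technical core of the proposition; in your write-up it appears only as a contingency ("should that happen\dots"), so the key fact you need, $|E(\bQ)/\hat\phi(E'(\bQ))|=4$ (equivalently, that the image of $\psi$ is exactly $\{1,2,11\dt13,2\dt11\dt13\}$), is not established, and the Selmer-only bound you would actually obtain exceeds $1$.

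Two smaller points. In the saturation step, $\alpha(P)=2\neq 1$ only shows $P\notin\hat\phi(E'(\bQ))\supseteq 2E(\bQ)$; to conclude that $n$ is odd you must also exclude $P-Q\in 2E(\bQ)$, i.e.\ use that $x(P)x(Q)\equiv 2\dt11\dt13$ is not a square (the class of $Q$ is $11\dt13$), which is true but needs saying. Finally, your route to excluding odd $n\ge 3$ (canonical height of $P$, a height-difference bound, and a finite search through the soluble quartics) is a legitimate alternative to the paper's argument, which instead exploits $e(R)\divides e(P)$, the addition-by-$Q$ formula, divisibility of $\alpha(R)$ at the primes of singular reduction, and the square classes already produced by the descent, finishing with a short congruence check; but as you concede this step too is only sketched, and since $\hat h(P)$ is large the "finite search" it requires is itself a substantial computation rather than a routine verification.
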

\begin{proof}
We first check that the torsion subgroup is as described. We have $E(\bF_3) = \bZ/2\bZ \times \bZ/2\bZ$ and $E(\bF_{19}) = \bZ/2\bZ \times \bZ/7\bZ$. For $\ell$ prime, by \cite[Prop.~VII.3.1]{Sil-AEC} we see that $E(\bQ)[\ell]$ injects into $E(\bF_3)$ for $\ell \neq 3$ and that $E(\bQ)[\ell]$ injects into $E(\bF_{19})$ for $\ell \neq 19$. These facts force $E(\bQ)[2] = \bZ/2\bZ$, $E(\bQ)[3] = 0$, and $E(\bQ)[\ell] = 0$ for $\ell \neq 2, 3$. Hence $E_\tors(\bQ) = \langle Q \rangle$.

Now we consider the free part. The point $P$ was found using Cremona's mwrank library \cite{Cre-mwrank} included in {\tt Sage} \cite{sage-4.4.1}. We may check that $P \in E(\bQ)$ using a computer; this shows that $\rank\, E \geq 1$. (The point $P$ is reasonably difficult to find from scratch; indeed the standard functions for computing $E(\bQ)$ in both {\tt Magma} and {\tt Sage} fail to find $P$.)

To show that $\rank E \leq 1$ we use a standard $2$-descent strategy (see for example \cite[Ch.~III]{ST-ratpoints}). Consider the auxiliary curve
 \[ E' : y^2 = x^3 - 2ax^2 + (a^2 - 4b)x. \]
There are isogenies $\phi : E \to E'$ and $\hat\phi : E' \to E$ of degree $2$, and injections
\begin{gather*}
 E(\bQ)/\hat\phi(E'(\bQ)) \overset{\psi}{\injection} S \subset \bQ^*/(\bQ^*)^2, \\
 E'(\bQ)/\phi(E(\bQ)) \overset{\psi'}{\injection} S' \subset \bQ^*/(\bQ^*)^2,
\end{gather*}
where $S$ consists of the cosets $\delta(\bQ^*)^2$ for $\delta \divides 2 \dt 5 \dt 11 \dt 13 \dt 443$, and $S'$ of the cosets for $\delta \divides 11 \dt 113 \dt 127 \dt 443$ (these are the primes dividing $b$ and $a^2 - 4b$ respectively). We must determine which elements of $S$ and $S'$ arise from points in $E(\bQ)$ and $E'(\bQ)$. This is achieved by testing for the existence of rational points on the two families of quartic curves
\begin{gather}
\notag
 C_\delta : \delta w^2 = \delta^2 z^4 + \delta a z^2 + b, \qquad \delta \in S, \\
\label{eq:Cprime}
C'_\delta : \delta w^2 = \delta^2 z^4 - 2\delta a z^2 + (a^2 - 4b), \qquad \delta \in S'.
\end{gather}

We first consider the $C'_\delta$. If $443 \divides \delta$ then \eqref{eq:Cprime} has no solution in $\bQ_{443}$; if $(\delta/5) = -1$ then it has no solution in $\bQ_5$; and if $\delta \neq 1 \pmod 8$ then it has no solution in $\bQ_2$. These conditions rule out all but $\delta = 1$ and $\delta = -113\dt127$. These correspond to the classes in $E'(\bQ)/\phi(E(\bQ))$ of $O$ and the unique two-torsion point of $E'(\bQ)$; both have trivial image in $\hat\phi(E'(\bQ))/2E(\bQ)$.

Now we examine the $C_\delta$. For $\delta = 11\dt13$ there is the trivial rational point $z = 0$, $w = 2\dt5\dt11\dt443$, corresponding to the class of $Q$ in $E(\bQ)/\hat\phi(E'(\bQ))$. For $\delta = 2$ there is a (highly nontrivial) rational point corresponding to $P$, namely $z = (\frac 12x(P))^{1/2}$, $w = y(P) (2x(P))^{1/2}$. Rational points are automatic for $\delta = 1$ and $\delta = 2\dt11\dt13$ since the image of $\psi$ is a subgroup of $S$. We will show that $C_\delta(\bQ) = \emptyset$ for all other $\delta$.

Rewriting the equation for $C_\delta$ as $4\delta w^2 = (2\delta z^2 + a)^2 - (a^2 - 4b)$, we see immediately that $\delta > 0$ since $a^2 - 4b < 0$. Next, note that $(p/113) = 1$ for $p = 2, 11, 13, 443$, but $(5/113) = -1$. Thus if $5 \divides \delta$ we have $(\delta/113) = -1$; this is impossible as $v_{113}(a^2 - 4b) = 1$. Therefore $5 \ndivides \delta$.

To finish the argument for the $C_\delta$ it suffices to show that $C_\delta(\bQ) = \emptyset$ for $\delta = 11$, $443$ and $11\dt443$; the statement for the remaining $\delta$ will then follow automatically from the subgroup property.

Let $\delta = 11$, $443$, or $11\dt443$. Let $u = z^2$ and let $(u, w) = (u_0/t, w_0/t)$ be a rational point on the conic $\delta w^2 = \delta^2 u^2 + \delta a u + b$, where $u_0, w_0, t \in \bZ$. Intersecting the conic with a line of slope $X/Y$ through $(u_0/t, w_0/t)$, we obtain the parameterization $z^2 = f(X,Y)/g(X,Y)$ where
\begin{align*}
 f(X, Y) & = u_0 X^2 - 2w_0 XY + (ta + \delta u_0)Y^2, \\
 g(X, Y) & = t(X^2 - \delta Y^2),
\end{align*}
and where we may assume that $X, Y \in \bZ$ and $(X, Y) = 1$. Taking resultants, we find that any prime $p$ dividing $f(X, Y)$ and $g(X, Y)$ must divide $t$ or $a^2 - 4b = -11^2\dt113\dt127\dt443^2$. Thus
\begin{align}
\label{eq:eps-1}
 f(X, Y) & = \eps Z^2, \\
\label{eq:eps-2}
 g(X, Y) & = \eps W^2
\end{align}
for some $\eps \divides 11\dt113\dt127\dt443 t$, and some $W, Z \in \bZ$. We now consider each $\delta$ in turn, summarizing the local obstructions encountered for each possible $\eps$.

Let $\delta = 11$. We take $u_0 = 3 \dt 5^2 \dt 443$, $w_0 = 2^2 \dt 5 \dt 11 \dt 443$, $t = 1$. Then $\eps \divides 11\dt113\dt127\dt443$. If $443 \divides \eps$ then \eqref{eq:eps-2} has no solution in $\bQ_{443}$. If $11 \divides \eps$ then \eqref{eq:eps-1} has no solution in $\bQ_{11}$. If $(\eps/11) = -1$ then \eqref{eq:eps-2} has no solution in $\bQ_{11}$. This leaves $\eps \in \{1, 113, -127, -113\dt127\}$. For these $\eps$ we have $(\eps/443) = 1$. Equation \eqref{eq:eps-1} implies that $X = 14Y$ or $X = 110Y \pmod{443}$; both options contradict \eqref{eq:eps-2}.

Now consider $\delta = 443$. We take $u_0 = -3 \dt 11 \dt 13$, $w_0 = 11 \dt 13 \dt 443$, $t = 2$. Then $\eps \divides 2\dt11\dt113\dt127\dt443$. Suppose that $443 \ndivides \eps$. If $(\eps/443) = 1$ then \eqref{eq:eps-2} has no solution in $\bQ_{443}$, and if $(\eps/443) = -1$ then \eqref{eq:eps-1} has no solution in $\bQ_{443}$. Now let $\eps = 443\eps'$. If $(\eps'/443) = -1$ then \eqref{eq:eps-2} has no solution in $\bQ_{443}$. Now assume that $(\eps'/443) = 1$. Observe that $(p/443) = (p/11)$ for $p \in \{-1, 2, 113, 127\}$, but $(11/443) = -1$. This implies that either $11 \ndivides \eps'$ and $(\eps'/11) = 1$, or $11 \divides \eps'$ and $(\frac{\eps'}{11}/11) = -1$. In both cases, \eqref{eq:eps-1} forces $Y = 10X \pmod{11}$, and this contradicts \eqref{eq:eps-2}.

Finally let $\delta = 11\dt443$. We take $u_0 = 5^3 \dt 11$, $w_0 = 2 \dt 5 \dt 11 \dt 443$, $t = 3^2$. Then $\eps \divides 3\dt11\dt113\dt127\dt443$. If $11 \ndivides \eps$ then there are no solutions to \eqref{eq:eps-1} in $\bQ_{11}$. Suppose that $11 \divides \eps$. Then since $(p/13) = 1$ for $p \in \{-1, 3, 113, 127, 443\}$ and $(11/13) = -1$, we have $(\eps/13) = -1$; then \eqref{eq:eps-1} has no solution in $\bQ_{13}$.

This completes the $2$-descent. In particular, we have found that
 \[ |E(\bQ)/2E(\bQ)| = |E(\bQ)/\hat\phi(E'(\bQ))| \cdot |\hat\phi(E'(\bQ))/2E(\bQ)| = 4 \cdot 1 = 4, \]
and that $E(\bQ)/2E(\bQ)$ is generated by $P$ and $Q$. Moreover, for $R \neq O, Q$ the image of $x(R)$ in $\bQ^*/(\bQ^*)^2$ is one of $\{1, 2, 11\dt13, 2\dt11\dt13\}$.

At this stage we know that $\langle P, Q \rangle$ is of finite index in $E(\bQ)$; we must still check that it exhausts $E(\bQ)$. Suppose not; then for some prime $\ell$ and some $R \in E(\bQ)$ we have $\ell R = P$ or $\ell R = P + Q$. We cannot have $2R = P$ as $P$ is not divisible by $2$ in $E(\bF_3$); similarly $2R = P + Q$ is excluded by considering $E(\bF_7)$. Thus we may assume that $\ell$ is odd. If $\ell R = P + Q$ we replace $R$ by $R + Q$, so now may assume that $\ell R = P$ and $\ell (R+Q) = P + Q$.

In this case $e(R) \divides e(P) = 7^2 \dt 41 \dt 71 \dt 193$. From \eqref{eq:add-Q} we have
 \[ x(P + Q) = \frac{2 \dt 5^2 \dt 7^4 \dt 11 \dt 13 \dt 41^2 \dt 71^2 \dt 193^2}{3^2 \dt 83^2 \dt 6481^2}, \]
so similarly $e(R + Q) \divides 3 \dt 83 \dt 6481$. Moreover by \eqref{eq:add-Q} we have 
$$
\alpha(R) \alpha(R+Q) = b \dt e(R)^2 e(R+Q)^2.
$$ 
Since $(\alpha(R), e(R)) = (\alpha(R+Q), e(R+Q)) = 1$ this implies that $\alpha(R) = b_1 e(R+Q)^2$ and $\alpha(R+Q) = (b/b_1) e(R)^2$ for some $b_1 \divides b$. Since $P$ has singular reduction at $p = 2, 11, 443$, so does $R$, so $2\dt11\dt443 \divides b_1$. Similarly we find that $2\dt5\dt11\dt13 \divides (b/b_1)$. Comparing with the classes of $\bQ^*/(\bQ^*)^2$ found by the $2$-descent shows that we must have $b_1 = 2\dt11^2\dt443^2$ and $b/b_1 = 2\dt5^2\dt11\dt13$.

At this point we have reduced to $24$ possibilities for $e(R)$ and $8$ possibilities for $\alpha(R)$, and it is straightforward to check using a computer that the only pair defining a point on $E(\bQ)$ is $R = P$. Alternatively one may finish the argument using congruences. We sketch one quick way to do it: first prove that $3 \divides \alpha(R)$ by considering images in $E(\bF_3)$. Then for only 10 remaining values of $x(R)$ is $x^3 + ax^2 + bx$ a square in $\bQ_{11}$, and for only one of these is it a square in $\bQ_{31}$.
\end{proof}

\begin{prop}
\label{prop:integral}
The only solution to \eqref{eq:weierstrass} with $x, y \in \bZ[\frac12]$ is $x = y = 0$.
\end{prop}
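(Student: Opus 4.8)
The plan is to use the explicit Mordell--Weil group from Proposition~\ref{prop:MW}. Since $E(\bQ) = \langle P\rangle \times \langle Q\rangle$, every affine rational point on $E$ is $nP$ with $n\in\bZ\setminus\{0\}$ or $nP+Q$ with $n\in\bZ$, and the claimed solution $x=y=0$ is the point $0\dt P + Q = Q = (0,0)$; so it suffices to prove that $nP$ ($n\neq 0$) and $nP+Q$ ($n\neq 0$) never lie in $E(\bZ[\frac12])$. I will repeatedly use that a nonzero $R\in E(\bQ)$ lies in $E(\bZ[\frac12])$ if and only if $e(R)$ is a power of $2$, i.e.\ $R$ does not reduce to the identity of $E(\bF_p)$ for any odd prime $p$.

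The multiples of $P$ are immediate. From the coordinates of $P$ we read off $e(P)=7^2\dt 41\dt 71\dt 193$; as $7\ndivides\Delta$, the prime $7$ is of good reduction and $P$, hence $nP$ for every $n$, reduces to the identity of $E(\bF_7)$. Thus $7\divides e(nP)$ whenever $n\neq 0$, and $x(nP)\notin\bZ[\frac12]$. The same trick handles $nP+Q$ when $n$ is odd: then $nQ=Q$, so $nP+Q=n(P+Q)$, and from \eqref{eq:add-Q} and the coordinates of $P$ one gets $e(P+Q)=3\dt 83\dt 6481$ (this is the computation of $x(P+Q)$ in the proof of Proposition~\ref{prop:MW}); as $3\ndivides\Delta$, the point $n(P+Q)$ reduces to the identity of $E(\bF_3)$ for $n\neq 0$, so $3\divides e(nP+Q)$.

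The remaining family, $nP+Q$ with $n$ even and nonzero, is the crux, since modulo every small prime above the classes of $P$, $Q$ and $P+Q$ reduce compatibly and the cheap congruence obstructions evaporate. Here I would run the classical descent for integral points on a curve with rational $2$-torsion. If $(x,y)\in E(\bZ[\frac12])$ then $x>0$ (because $x^2+ax+b>0$, as $a^2-4b<0$), and $\gcd(x,\,x^2+ax+b)\divides b$ forces $x=\delta u^2$, $x^2+ax+b=\delta v^2$ with $\delta\divides b$ and $u,v\in\bZ[\frac12]$; eliminating $x$ yields $v^2=\delta u^4+au^2+b/\delta$. The $2$-descent recorded at the end of the proof of Proposition~\ref{prop:MW} shows $x(R)$ lies in one of the classes $\{1,2,11\dt 13,2\dt 11\dt 13\}$ of $\bQ^*/(\bQ^*)^2$, so after absorbing powers of $2$ only $\delta=1$ and $\delta=11\dt 13$ survive; the family $nP+Q$ corresponds to $\delta=11\dt 13$, where $b/(11\dt 13)=(2\dt 5\dt 11\dt 443)^2$ and $u=0$ recovers $Q$. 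For $\delta=1$ the curve $v^2=u^4+au^2+b$ has no $\bZ[\frac12]$-point at all: with $W=u^2$ it becomes $(2v)^2-(2W+a)^2=4b-a^2=11^2\dt 113\dt 127\dt 443^2$, and running through the finitely many factorizations of $4b-a^2$ shows $W$ is never a nonnegative square (in particular $W=0$ fails since $b$ is not a square). For $\delta=11\dt 13$, again with $W=u^2$, one gets $11\dt 13\,(2v)^2-(2\dt 11\dt 13\,W+a)^2=4b-a^2$; a local analysis at $443$ (which is inert in $\bQ(\sqrt{11\dt 13})$ since $(11/443)=-1$ while $(13/443)=1$, and $v_{443}(4b-a^2)=2$) forces $443\divides u$, a further such analysis at $11$ forces $11\divides u$, and extracting these factors reduces the curve to an equation of the shape $X^2-11\dt 13\,Y^2=-113\dt 127$ in which $X$ is constrained to run over an explicit family of perfect-square-plus-shift values. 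Since $12^2-143=1$, the Pell equation $X^2-143Y^2=1$ has the tiny fundamental solution $(12,1)$; one then lists the finitely many base solutions of $X^2-143Y^2=-113\dt 127$ and checks that the perfect-square constraint on $X$ isolates only the solution with $u=0$.

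The hard part is precisely this last elimination: showing that the genus-one curve $v^2=11\dt 13\,u^4+au^2+(2\dt 5\dt 11\dt 443)^2$ carries no $\bZ[\frac12]$-point beyond the two over $u=0$. Because it has infinitely many rational points, a finite search does not suffice by itself; one needs either a further local obstruction --- the primes $113$ and $127$ dividing $a^2-4b$, or the inert prime $443$, being the natural candidates --- or a second descent that bounds the relevant Pell solutions so that only a short by-hand verification remains. Everything else is routine; this is where the content of the proposition sits.
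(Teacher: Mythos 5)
Your first two cases coincide with the paper's proof and are fine: $7\divides e(P)$ kills $nP$ for $n\neq 0$, and $3\divides e(P+Q)$ kills $nP+Q$ for $n$ odd. The gap is the remaining family $nP+Q$ with $n$ even and nonzero, and you concede it yourself: your descent funnels all of these points onto the single quartic $v^2 = 11\dt 13\,u^4 + au^2 + b/(11\dt 13)$, which has \emph{infinitely many} rational points (it is the homogeneous space carrying the images of all the points $nP+Q$), so no fixed finite list of local obstructions at $11$, $113$, $127$, $443$ can exclude its $\bZ[\frac12]$-points, and the ``Pell solutions subject to a perfect-square constraint'' elimination you invoke is exactly the classically hard integral-point problem --- finishing it would require height bounds, linear forms in logarithms, or some curve-specific arithmetic input you do not supply. (There are also smaller untreated issues: your divisor/difference-of-squares arguments are phrased for integers but the points live in $\bZ[\frac12]$, so $2$-adic denominators in $u,v$ must be handled; and lumping all even $n$ together hides that the obstruction genuinely varies with the $2$-adic valuation of $n$.)

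The paper closes this case with the group structure rather than a descent on the quartic. For $n=2r$, $r$ odd, it observes $79\divides e(2P+Q)$, hence $79\divides e(r(2P+Q))=e(nP+Q)$. For $4\divides n$, $n=2^i r$ with $r$ odd, it proves by induction --- using the duplication formula \eqref{eq:double} together with the fact that there is no local cancellation in that formula at primes of nonsingular reduction (and that $4P$ has nonsingular reduction everywhere) --- that $\alpha(2^jP)\equiv\pm 4\pmod 7$ for $j\geq 2$. Hence some prime $q\not\equiv 1\pmod 7$ divides $\alpha(2^iP)$; one checks $q$ is a prime of good reduction, so the addition-with-$Q$ formula \eqref{eq:add-Q} gives $q\divides e(2^iP+Q)$, and then $q\divides e(r(2^iP+Q))=e(nP+Q)$, so $x(nP+Q)\notin\bZ[\frac12]$. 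This mechanism --- manufacturing, for each $2$-adic valuation of $n$, an odd prime forced into the denominator of $x(nP+Q)$ --- is the idea your proposal is missing, and without it (or a genuine substitute) the proposition is not proved.
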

\begin{proof}
Let $n \in \bZ$, $k \in \{0, 1\}$. We must prove that $x(nP + kQ) \notin \bZ[\frac12]$ for $n \neq 0$. We consider several cases.

First suppose that $k = 0$ and $n \neq 0$. Since $7 \divides e(P)$, also $7 \divides e(nP)$, so $x(nP) \notin \bZ[\frac12]$.

Next suppose that $k = 1$ and that $n$ is odd. Since $3 \divides e(P + Q)$, we have $3 \divides e(n(P + Q)) = e(nP + Q)$, so $x(nP + Q) \notin \bZ[\frac12]$.

Now suppose that $k = 1$ and $n = 2r$ where $r$ is odd. Since $79 \divides e(2P + Q)$, we have $79 \divides e(r(2P + Q)) = e(nP + Q)$, so $x(nP + Q) \notin \bZ[\frac12]$.

The last case is $k = 1$, $n = 0 \pmod 4$, $n \neq 0$. Write $n = 2^i r$ for some $i \geq 2$ and odd $r$. To continue the pattern we must find a prime $q$ playing the same role as $79$ from the previous case. For this, we first establish that
\begin{equation}
\label{eq:mod7}
 \alpha(2^j P) = \pm 4 \pmod 7 \qquad \text{for $j \geq 2$.}
\end{equation}
Indeed, one checks that $4P$ has nonsingular reduction for all $p$. The doubling formula \eqref{eq:double} and the comments regarding cancellation immediately following it then imply that $\alpha(2^{j+1}P) = \pm (\alpha(2^j P)^2 - b e(2^j P)^4)^2$ for all $j \geq 2$. Since $7 \divides e(2^j P)$ and $\alpha(4P) = \pm 4 \pmod 7$, identity \eqref{eq:mod7} follows by induction.

In particular $\alpha(2^i P) = \pm 4 \pmod 7$, so there must exist some prime $q$, not congruent to $1$ modulo $7$, dividing $\alpha(2^i P)$. We cannot have $q = 113$ or $q = 127$, as both of these are $1 \pmod 7$. Also, $q \notin \{2, 5, 11, 13, 443\}$, since for all of these primes the point $(0:0:1)$ is singular in $E(\bF_p)$, whereas $2^i P$ has nonsingular reduction for all $p$. Therefore $q$ is not a prime of bad reduction. From \eqref{eq:add-Q} we obtain $q \divides e(2^i P + Q)$. Finally, since $nP + Q = r(2^i P + Q)$, we have also $q \divides e(nP + Q)$, so that $x(nP + Q) \notin \bZ[\frac12]$.
\end{proof}
\begin{rema}
In several places in the above proof we use certain facts about $2P$ and $4P$. It is not necessary to compute their full coordinates, which are quite large (for example $\alpha(4P)$ has 256 digits). In every case it is possible to work $p$-adically to low precision. For example, to check that $4P$ has nonsingular reduction at $2$, it suffices to apply the doubling formula twice, using as input $a = 1 \pmod{2^3}$, $b = 28 \pmod{2^5}$ and $x(P) = 2 \pmod{2^4}$, to find that $x(2P) = 4 \pmod{2^5}$ and $x(4P) = 2^{-4} \pmod{2^{-3}}$.
\end{rema}

\bibliographystyle{plain}
\bibliography{hodgehilb}

\end{document}